\newsavebox\myboxA
\newsavebox\myboxB
\newlength\mylenA
\newcommand*\xoverline[2][0.75]{%
	\sbox{\myboxA}{$\m@th#2$}%
	\setbox\myboxB\null
	\ht\myboxB=\ht\myboxA%
	\dp\myboxB=\dp\myboxA%
	\wd\myboxB=#1\wd\myboxA
	\sbox\myboxB{$\m@th\overline{\copy\myboxB}$}
	\setlength\mylenA{\the\wd\myboxA}
	\addtolength\mylenA{-\the\wd\myboxB}%
	\ifdim\wd\myboxB<\wd\myboxA%
	\rlap{\hskip 0.5\mylenA\usebox\myboxB}{\usebox\myboxA}%
	\else
	\hskip -0.5\mylenA\rlap{\usebox\myboxA}{\hskip
		0.5\mylenA\usebox\myboxB}%
	\fi}
\def\subsection{\@startsection{subsection}{2}%
	\z@{1\linespacing}{.3\linespacing}%
	{\normalfont\bfseries}}
\newtheorem*{thm*}{Theorem}
\newtheorem{thm}{Theorem}
\newtheorem{prop}[thm]{Proposition}
\newtheorem{cor}[thm]{Corollary}
\newtheorem{lem}[thm]{Lemma}
\newtheorem*{lem*}{Lemma}
\newtheorem*{conj*}{Conjecture}
\theoremstyle{remark}
\newtheorem{rmk}{Remark}[section]
\numberwithin{equation}{section}
\numberwithin{thm}{section}
\def\sgn{\operatorname{sgn}}
\def\det{\operatorname{det}}
\def\perm{\operatorname{perm}}
\mathchardef\pFcomma=\mathcode`, 
\newcommand*\pFq[5]{%
	\begingroup
	\begingroup\lccode`~=`,
	\lowercase{\endgroup\def~}{\pFcomma\mkern\pFqskip}%
	\mathcode`,=\string"8000
	{}_{#1}F_{#2}\biggl[\genfrac..{0pt}{}{#3}{#4};#5\biggr]%
	\endgroup
}
\begin{document}
	
	\title{Inverting the Kasteleyn matrix for holey hexagons}
	
	\author[T. Gilmore]{Tomack
		Gilmore$^\dagger$}
	\address{Fakult\"at f\"ur Mathematik der Universit\"at Wien,\\
		Oskar-Morgernstern-Platz 1, 1090 Wien, Austria.}
	\email{tomack.gilmore@univie.ac.at}
	\thanks{$\dagger$Research
		supported by the Austrian Science Foundation (FWF), grant F50-N15, in the
		framework of the Special Research Program ``Algorithmic and Enumerative
		Combinatorics''.}
	\begin{abstract}
		Consider a semi-regular hexagon on the triangular lattice (that is, the lattice consisting of unit equilateral triangles, drawn so that one family of lines is vertical). Rhombus (or lozenge) tilings of this region may be represented in at least two very different ways: as families of non-intersecting lattice paths; or alternatively as perfect matchings of a certain sub-graph of the hexagonal lattice. In this article we show how the lattice path representation of tilings may be utilised in order to calculate the entries of the inverse Kasteleyn matrix that arises from interpreting tilings as perfect matchings. Our main result gives precisely the inverse Kasteleyn matrix (up to a possible change in sign) for a semi-regular hexagon of side lengths $a,b,c,a,b,c$ (going clockwise from the south-west side). Not only does this theorem generalise a number of known results regarding tilings of hexagons that contain punctures, but it also provides a new formulation through which we may attack problems in statistical physics such as Ciucu's electrostatic conjecture.
	\end{abstract}
	\maketitle
	\section{Introduction}
	
	A semi-regular hexagon on the unit triangular lattice is an hexagonal region where each pair of parallel edges that comprise its outer boundary are of the same length. Such a region encloses equinumerous sets of left and right pointing unit triangles (see Figure~\ref{fig:Dual}) and by joining together all pairs of unit triangles that share exactly one edge we obtain what is known as a rhombus (or lozenge) tiling of the hexagon (Figure~\ref{fig:LineLab} shows an example of a tiling of a hexagon where two unit triangles have been removed).
	
	Rhombus tilings of hexagons have been studied in one form or another for over 100 years- in the literature perhaps the earliest result relating to these objects is MacMahon's boxed plane partition formula\footnote{Although MacMahon was originally concerned with counting plane partitions contained within a box there is a straightforward bijection that relates them to rhombus tilings of hexagonal regions.}~\cite{MacMahon16}. Since then these classical combinatorial objects have been the focus of a great deal of research and are (with respect to enumerating tilings) reasonably well understood. An excellent survey of the history of plane partitions, their symmetry classes, and their relation to rhombus tilings may be found in~\cite{KrattSTAN}. 
	
	More recently, many results have arisen concerning rhombus tilings of regions that contain gaps or holes within their interiors\footnote{These are sometimes referred to as \emph{holey hexagons}.} (see ~\cite{CiucuFischer12,CiucuFischer16,CiucuKratt13}\cite{Eisen99b,Eisen01,Eisen99a,Fischer01} to name but a few), however each individual result treats a separate and distinct class of holes. As far as the author is aware there exists to date no result that unifies these recent works, bringing them together under one roof. 
	
	Within this area perhaps the most striking result of all is a conjecture due to Ciucu~\cite{Ciucu08} (see Section~\ref{sec:App}), which draws parallels between the correlation function of holes within a ``sea of rhombi"\footnote{The correlation of holes may loosely be interpreted as a measure of the ``effect" that holes have on rhombus tilings of the plane, it is defined formally in Section~\ref{sec:App}.} and Coulomb's law for two dimensional electrostatics. This conjecture remains wide open, although it has been proved for a small number of different classes of holes (see for example~\cite{Gilmore16,Gilmore15} by the author, and~\cite{Ciucu09} for a similar result for tilings embedded on the torus). A proof of Ciucu's conjecture is thus desirable, not least because it also incorporates an analogous conjecture to that of Fisher and Stephenson~\cite{FishSteph63} (proved very recently by Dub\'edat~\cite{Dub15}).
	
	The main result of this article (Theorem~\ref{thm:Main}) arose from attempts to prove Ciucu's conjecture for a large class of holes. Roughly speaking, this result gives an exact formula for the entries of the inverse Kasteleyn matrix that corresponds to semi-regular hexagonal regions of the triangular lattice. This immediately generalises earlier formulas due to both Fischer~\cite{Fischer01} and Eisenk\"olbl~\cite{Eisen99a} that count tilings containing a fixed rhombus or pair of unit triangular holes that touch at a point. Moreover by combining Theorem~\ref{thm:Main} with an earlier result of Kenyon~\cite{Kenyon97} we also obtain an expression for the number of tilings of a region that contains holes that have even \emph{charge} (see Remark~\ref{rmk:charge} in Section~\ref{sec:RhombTil}) that involves taking the determinant of a matrix whose size is dependent on the size of the \emph{holes}, and not the size of the \emph{region to be tiled}. The class of holes for which this holds is very large, large enough, in fact, that Theorem~\ref{thm:Main} offers an alternative way to derive a large number of the enumerative results mentioned above. In the same vein, this approach may also be specialised in order to recover the generalisation of Kuo condensation described in~\cite{Ciucu15} for the regions under consideration in this article.
	
	More important than these enumerative results, however, is the potential application of Theorem~\ref{thm:Main} to a number of problems in statistical physics. After successfully extracting the asymptotics of the individual entries of the inverse Kasteleyn matrix as the size of the region tends to infinity (this has yet to be completed) we would in the first instance obtain an analogous result to that of Kenyon~\cite{Kenyon97} who considers the local statistics of fixed rhombi within tilings embedded on a torus. Further to this, under the somewhat reasonable assumption that in the limit these entries will lead to a straightforward determinant evaluation (as similar analysis showed in~\cite{Gilmore16}), Theorem~\ref{thm:Main} could very well lead to a proof of Ciucu's conjecture for the most general class of holes to date. By stretching these assumptions a little further it is not so difficult to imagine that we may also be able to obtain an alternative proof to that given by Dub\'edat of Fisher and Stephenson's conjecture from 1963.
	
	Establishing Theorem~\ref{thm:Main} relies on representing rhombus tilings of hexagons in two very different ways. In Section~\ref{sec:PerfMatch} we review a method due to Kasteleyn that allows us to count perfect matchings of a planar bipartite graph by taking the determinant of its bi-adjacency matrix (referred to as the Kasteleyn matrix of the graph). Rhombus tilings of a hexagon are in this section considered in terms of perfect matchings on a sub-graph of the hexagonal lattice, and it is here that we discuss how the number of perfect matchings of such a sub-graph that contains gaps or holes may be calculated by considering the inverse of its corresponding Kasteleyn matrix. In Section~\ref{sec:RhombTil} we move on to considering tilings as families of non-intersecting paths consisting of unit north and east steps on the (half) integer lattice. Each region that contains holes yields a corresponding lattice path matrix, and in Section~\ref{sec:Comb} we use a very recent result of Cook and Nagel~\cite{CookNagel15} to show how the lattice path matrix that arises from the path representation of tilings may be used to calculate the entries of the inverse Kasteleyn matrix corresponding to the same region. Section~\ref{sec:Exact} is then dedicated to proving an exact formula for the determinant of the corresponding lattice path matrices, from which Theorem~\ref{thm:Main} easily follows. We conclude in Section~\ref{sec:App} by discussing in a little more depth some of the potential applications of our main result.

	\section{Perfect matchings on the hexagonal lattice}\label{sec:PerfMatch}	
	We begin by discussing a method by which one may enumerate perfect matchings of bipartite combinatorial maps, originally due to Kasteleyn~\cite{Kasteleyn67}. In the following we consider planar bipartite maps, however Kasteleyn's method is in fact applicable to any planar map (see Remark~\ref{rmk:Kast} for further details).
	\subsection{Kasteleyn's method}\label{subsec:KastMeth}
	Let $G=(V,E)$ be a planar bipartite graph consisting of a set of equinumerous black and white vertices $V=\{b_1,\dots,b_n,w_1,\dots,w_n\}$ and a set of edges $E$, and suppose that $G$ is embedded on a sphere (such a graph is sometimes referred to as a \emph{planar bipartite combinatorial map}- from now on, simply a \emph{map}). A \emph{matching} of $G$ is a subset of its edges, say $E'\subseteq E$, together with the vertices with which they are incident, say $V'\subseteq V$, such that every vertex in $V'$ is incident with precisely one edge in $E'$. A matching is \emph{perfect} if $V'=V$ (see Figure~\ref{fig:bipartite}).
	
	Suppose we label the black and white vertices of $G$ from $b_1,b_2,\dots,b_n$ and $w_1,w_2,\dots,w_n$ respectively and attach to its edges taken from some commutative ring, thereby obtaining a \emph{weighted map $G_w$} where the weight of an edge that connects two adjacent vertices $b_i, w_j\in V$ is denoted by $w(b_i,w_j)$ (if $b_i,w_j$ are not adjacent then we set $w(b_i,w_j)=0$). For some  $\sigma\in\mathfrak{S}_n$ (that is, the symmetric group on $n$ letters) let $P_m(B,W_{\sigma})$ denote the product of the weights of the edges of the perfect matching in which $b_i\in B:=(b_1,b_2,\dots,b_n)$ and $w_{\sigma(j)}\in W_{\sigma}:=(w_{\sigma(1)},w_{\sigma(2)},\dots,w_{\sigma(n)})$ are adjacent. The sum over all such weighted perfect matchings of $G$ is thus
	$$\sum_{\sigma\in\mathfrak{S}_{n}}P_m(B,W_{\sigma}).$$
	
		\begin{figure}[t!]
			\includegraphics[width=0.33 \textwidth]{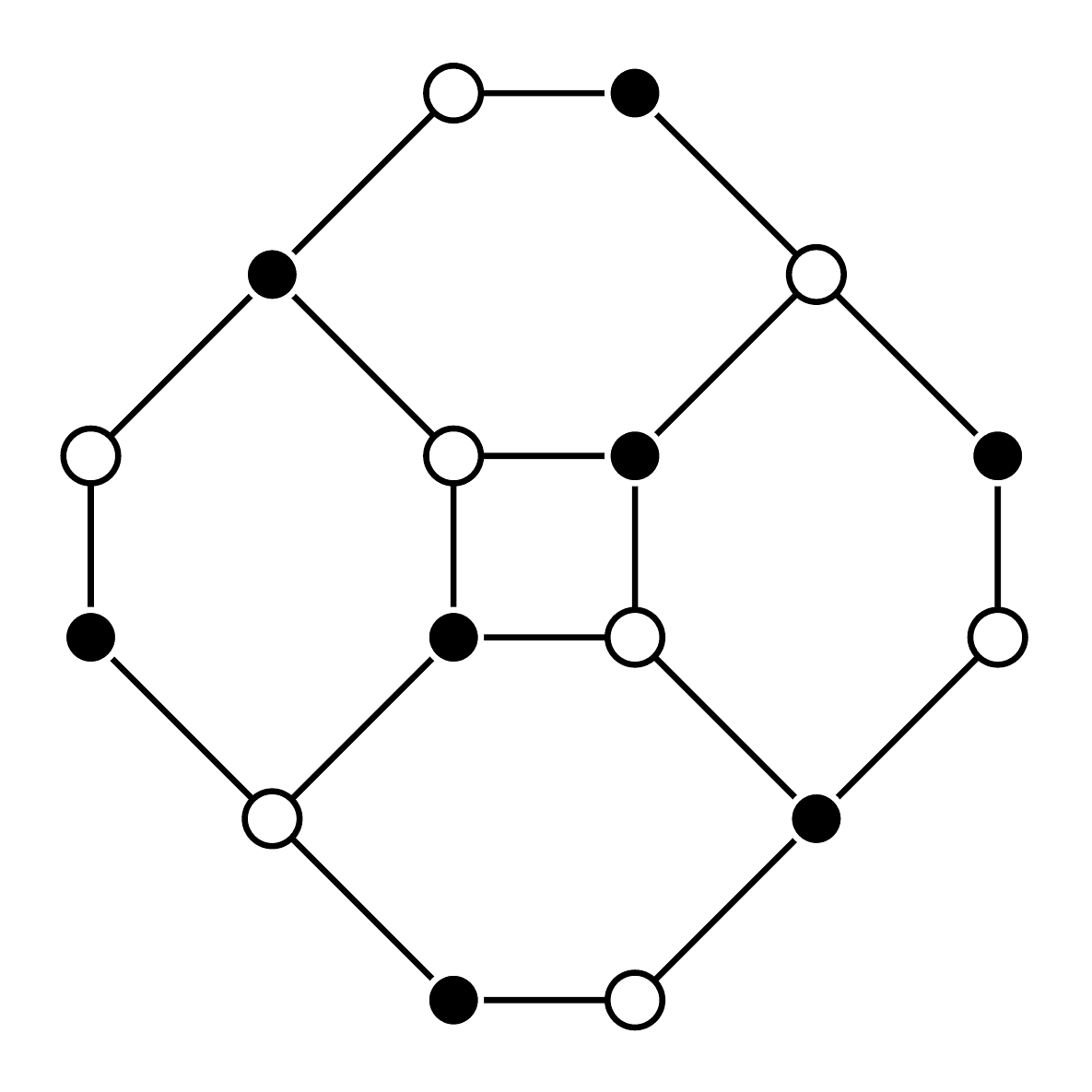}\includegraphics[width=0.33 \textwidth]{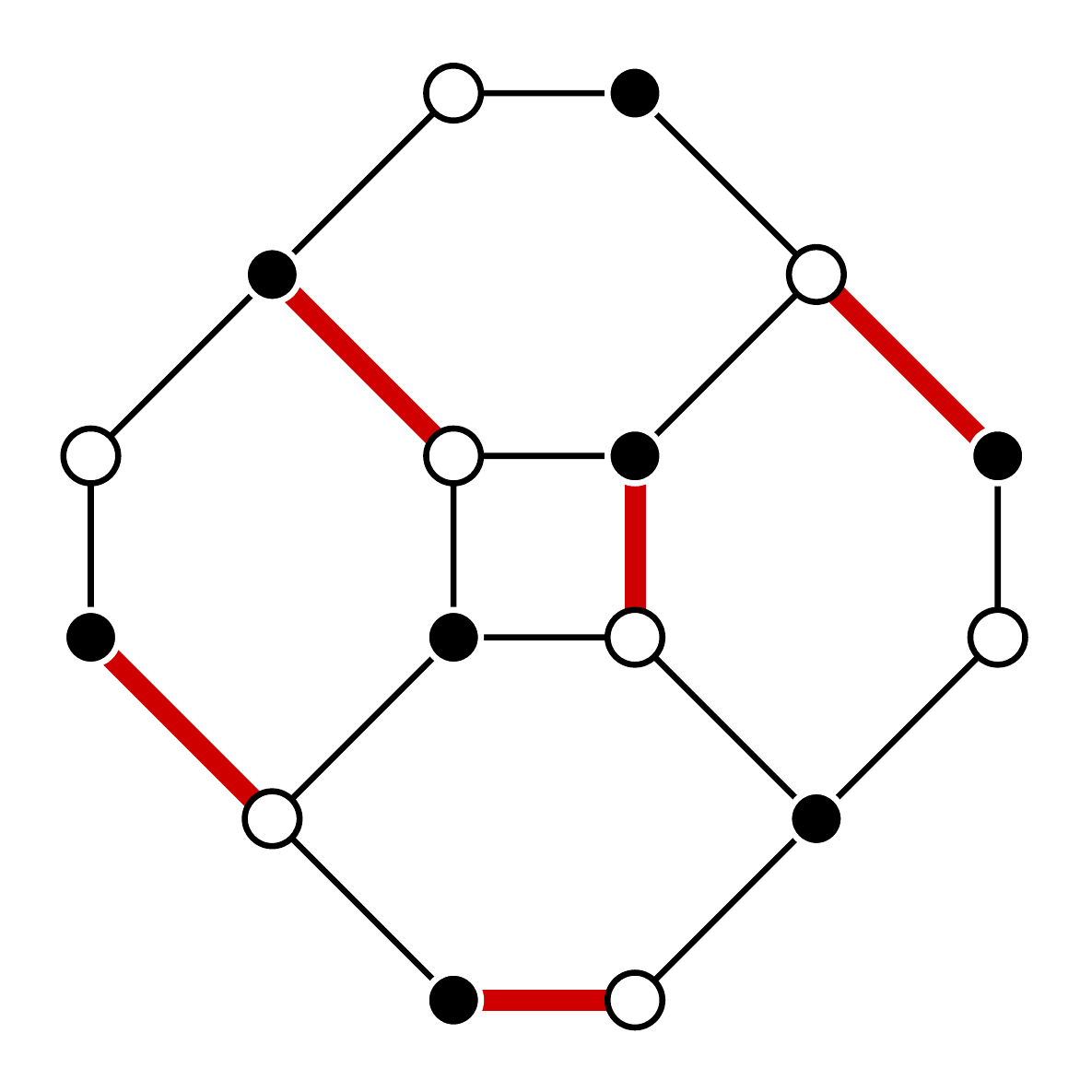}\includegraphics[width=0.33 \textwidth]{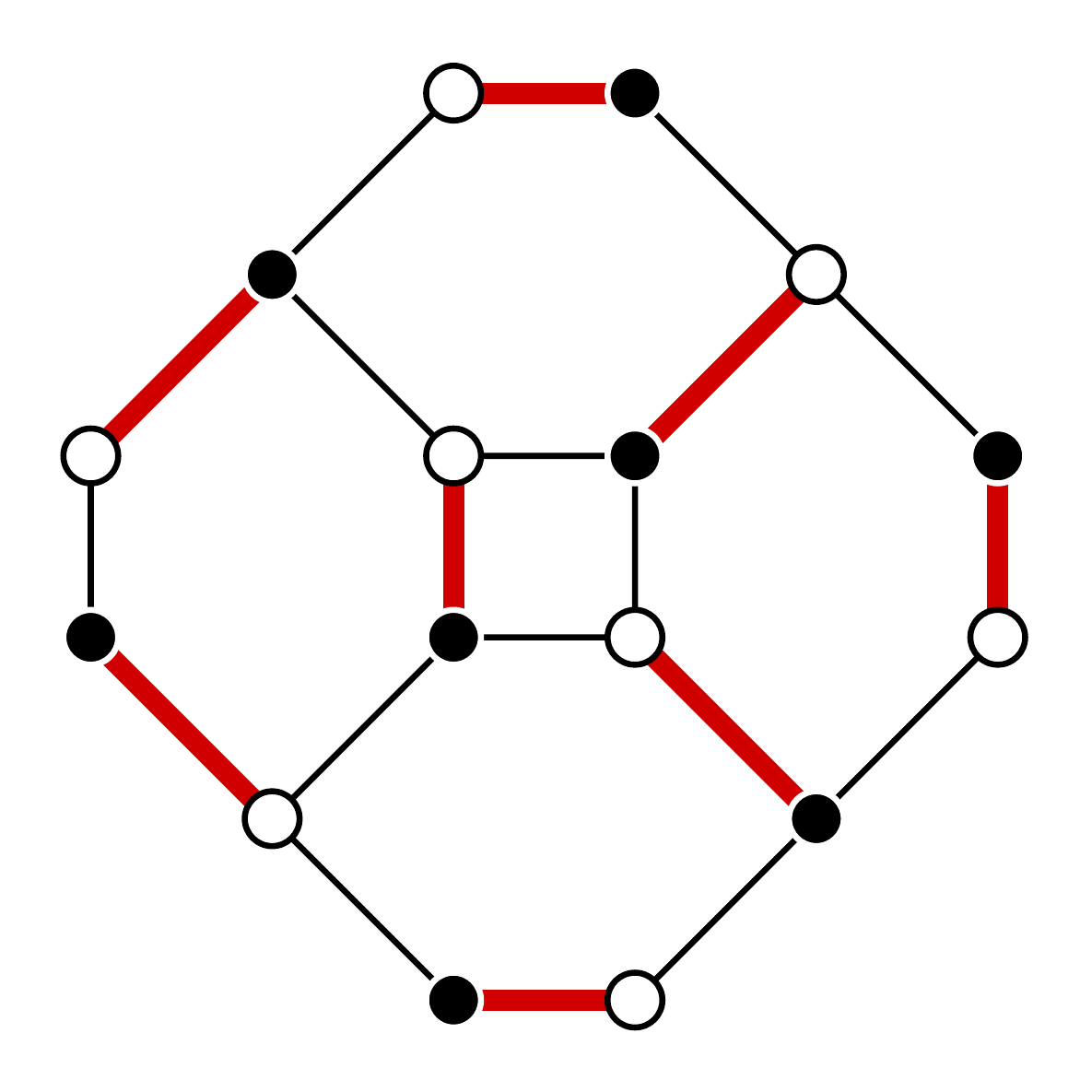}\caption{From left to right: a bipartite graph; a matching; a perfect matching.}\label{fig:bipartite}
		\end{figure}
	
Let us define the \emph{weighted bi-adjacency matrix of $G_w$} to be the $n\times n$ matrix $A_{G_w}$ with $i$-th row and $j$-th column indexed by the vertices $b_i$ and $w_j$ respectively, where each $(i,j)$- entry is given by $w(b_i,w_j)$. Then we may re-write the above expression as
\begin{equation}\label{eq:Perm}\sum_{\pi\in \mathfrak{S}_{n}}\prod_{i}^{n}(A_{G_w})_{b_i,w_{\pi(i)}},\end{equation}
which is otherwise known as the \emph{permanent} of $A_{G_w}$ (denoted $\perm(A_{G_w})$)\footnote{In order to count the number of perfect matchings we simply set all edge weights between adjacent vertices to be 1.}.

If our goal is to find a closed form evaluation for the expression in~\eqref{eq:Perm} then at first sight one may be forgiven for thinking that we have reached a dead end. The permanent of a matrix is, after all, a somewhat enigmatic function whose properties are little understood; indeed, Valiant's algebraic variations of the \emph{P vs. NP} problem~\cite{Valiant79} may be phrased in terms of the complexity of its computation. A great deal more is known, however, about the determinant of a matrix- the much loved distant relative of the permanent with a comparative abundance of useful, well-understood properties, obtained by multiplying the product in each term of the summand in~\eqref{eq:Perm} by the signature (or sign) of its corresponding permutation. 

How, then, may we relate the permanent of a matrix to its determinant? As far as the author is aware there exists no general method that allows us to express one in terms of the other, however Kasteleyn~\cite{Kasteleyn67} showed that in certain situations this is indeed possible. 

Suppose we endow the surface of the sphere on which $G_w$ is embedded with an orientation in the clockwise direction. Let us orient the edges of $G_w$ so that each edge is directed from a black vertex to a white one, thereby obtaining an \emph{oriented} weighted map (see Figure~\ref{fig:orient},left). Kasteleyn showed that for such maps it is always possible to change the direction of a finite (possibly empty) set of edges so that in each oriented face of $G_w$ an odd number of edges agree with the orientation of the surface of the sphere (when the edges are viewed from the centre of each face). Such an orientation is called \emph{admissible} and we will denote by $G_w^+$ the weighted map $G_w$ together with an admissible orientation (see Figure~\ref{fig:orient}, right). We encode such an orientation within the weighting of $G_w$ by multiplying by $-1$ the weights of those edges that are directed from white vertices to black. The weighted bi-adjacency matrix of $G_w^+$, $A_{G_w^+}$, is referred to as the \emph{Kasteleyn matrix} of $G_w$, and it follows from~\cite{Kasteleyn67} that
$$|\det(A_{G_w^+})|=|\perm(A_{G_w})|.$$
	
\begin{rmk}\label{rmk:Kast}It should be noted that Kasteleyn's method is in fact more general than it appears here. Indeed one can use a similar approach to count weighted perfect matchings of any planar graph. In this case, rather than a determinant, one considers the \emph{Pfaffian} of a weighted adjacency matrix whose rows (and also its columns) are indexed by \emph{all} the vertices of the graph. For bipartite graphs a straightforward argument shows that such a computation reduces to the situation described above.
\end{rmk}
\begin{figure}[t!]
	\includegraphics[width=0.3 \textwidth]{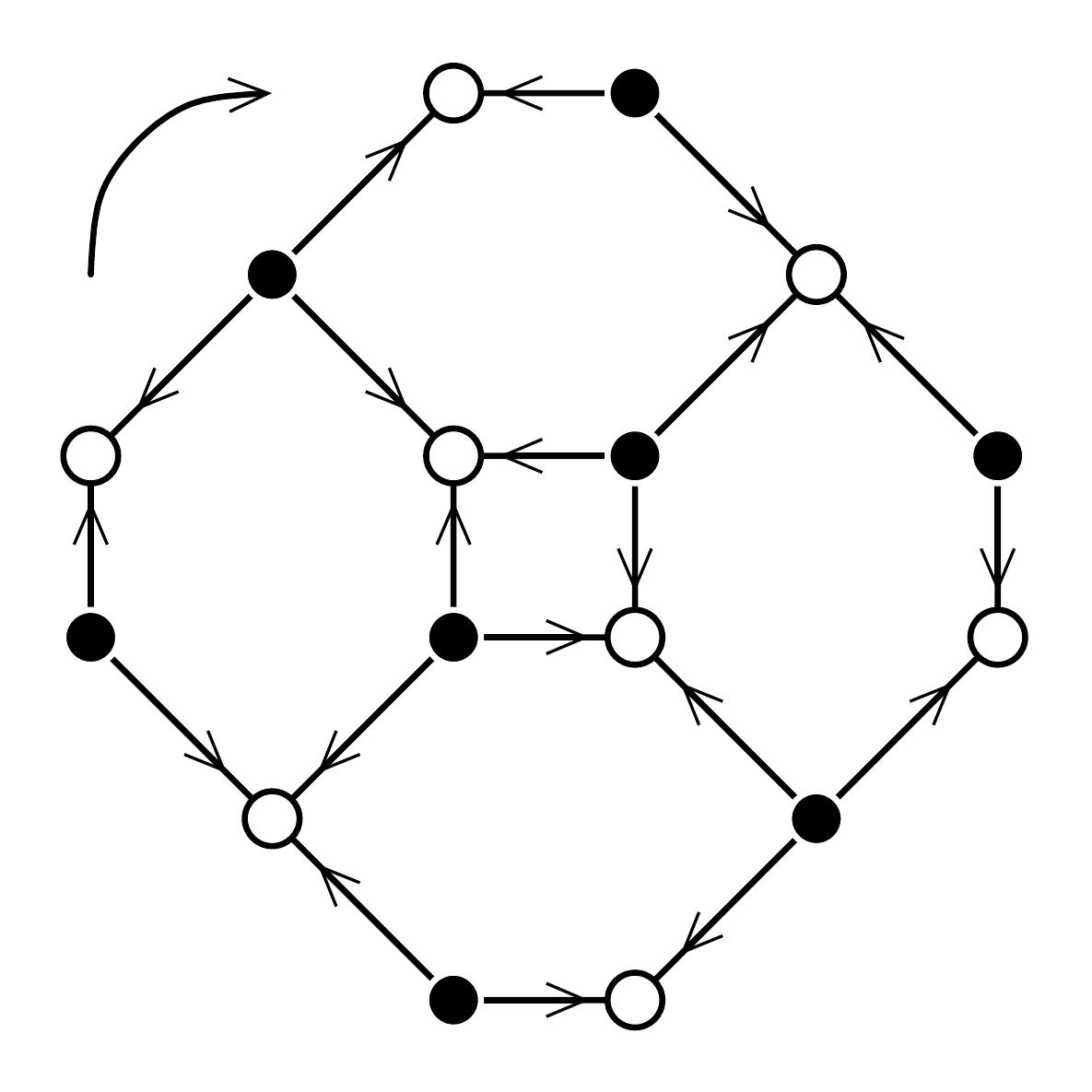}\,\,\,\,\,\,\,\,\,\includegraphics[width=0.3 \textwidth]{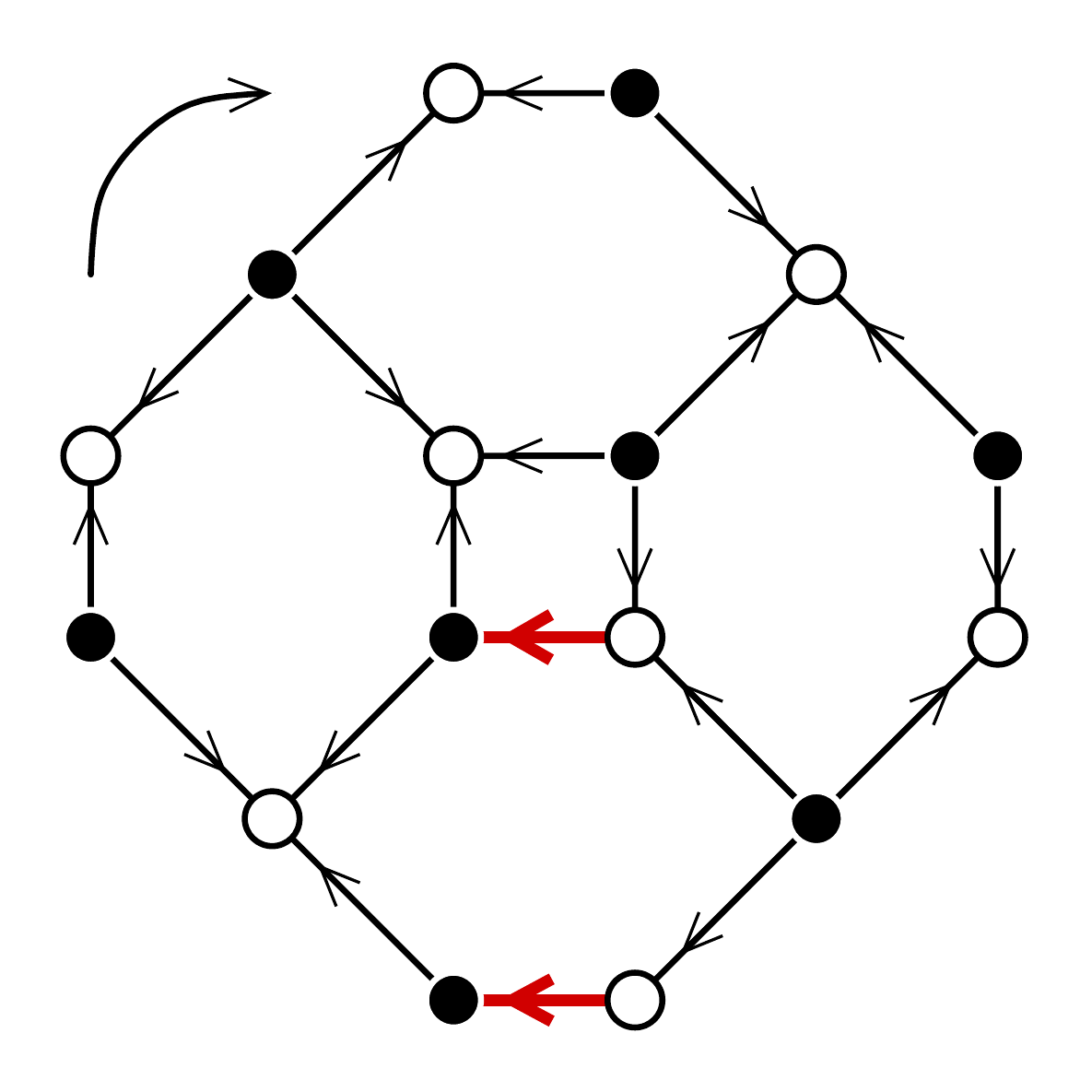}\caption{An orientation that is not admissible (left) and an admissible one obtained by changing the direction of two (red-coloured) edges (right).}\label{fig:orient}
\end{figure}

\subsection{Kasteleyn's method on the hexagonal lattice}\label{subsec:KastHex}
Imagine the plane is tiled with regular hexagons\footnote{By which we mean that all sides are of the same length.} that do not overlap nor contain any gaps, arranged so that the boundary of each hexagon contains a pair of horizontal parallel edges. Suppose we place vertices at the corners of each hexagon coloured in a chessboard fashion (that is, the vertices are coloured white and black in such a way that no vertex is adjacent to another vertex of the same colour). Let $\mathscr{H}$ denote the set of vertices and edges obtained from this tiling ($\mathscr{H}$ is often referred to as the \emph{hexagonal lattice}, see Figure~\ref{fig:HexSub}, left).

Let $G_{a,b,c}$ denote the sub-graph of $\mathscr{H}$ whose outer boundary is determined by beginning at the centre of an hexagonal face and traversing faces that share a common edge via $(a-1)$ north-west edges, then $(b-1)$ north edges, then $(c-1)$ north-east edges, then $(a-1)$ south-east edges, $(b-1)$ south edges, and finally $(c-1)$ south-west edges. Such a region shall be referred to as an \emph{hexagonal sub-graph of} $\mathscr{H}$ (an example may be seen in Figure~\ref{fig:HexSub}, left). In order to count the number of perfect matchings of $G_{a,b,c}$ let us attach a weight of $1$ to each edge contained within it.

Suppose we identify together the edges of the plane and endow the (outer) surface of the resulting sphere with a sense of rotation in the clockwise direction. If the edges of the lattice are directed from black vertices to white then clearly within each hexagonal face of $G_{a,b,c}$ the direction of an odd number of edges will agree with the orientation of the plane when viewed from the centre of the face. Once we have convinced ourselves that the outer boundary (which is also a face) also satisfies this condition we see that this orientation of $G_{a,b,c}$ is already admissible (see Figure~\ref{fig:HexSub}, right).

If we label the $(ab+bc+ca)$-many vertices in each colour class that comprise $G_{a,b,c}$ then according to Kasteleyn's method the number of tilings of $G_{a,b,c}$ (denoted $M(G_{a,b,c})$) is
$$|\det(A_{G_{a,b,c}})|,$$
where $A_{G_{a,b,c}}=((A_{G_{a,b,c}})_{b_i,w_j})_{b_i,w_j\in G_{a,b,c}}$ is the bi-adjacency matrix of $G_{a,b,c}$ with entries given by
$$(A_{G_{a,b,c}})_{b_i,w_j}:=\begin{cases}1 & b_i,w_j \textrm{ {\it adjacent}},\\ 0 & otherwise.\end{cases}$$

\begin{rmk}Since we are chiefly concerned with counting perfect matchings on such hexagonal sub-graphs of $\mathscr{H}$ we shall abuse our notation in the following way: for general $a,b,c$ the admissibly oriented hexagonal sub-graph $G_{a,b,c}$ with edge weights of $1$ shall from now on be denoted $G$, and $A_{G}$ shall denote its corresponding bi-adjacency matrix.
\end{rmk}
\begin{figure}[t!]
	\includegraphics[width=0.4 \textwidth]{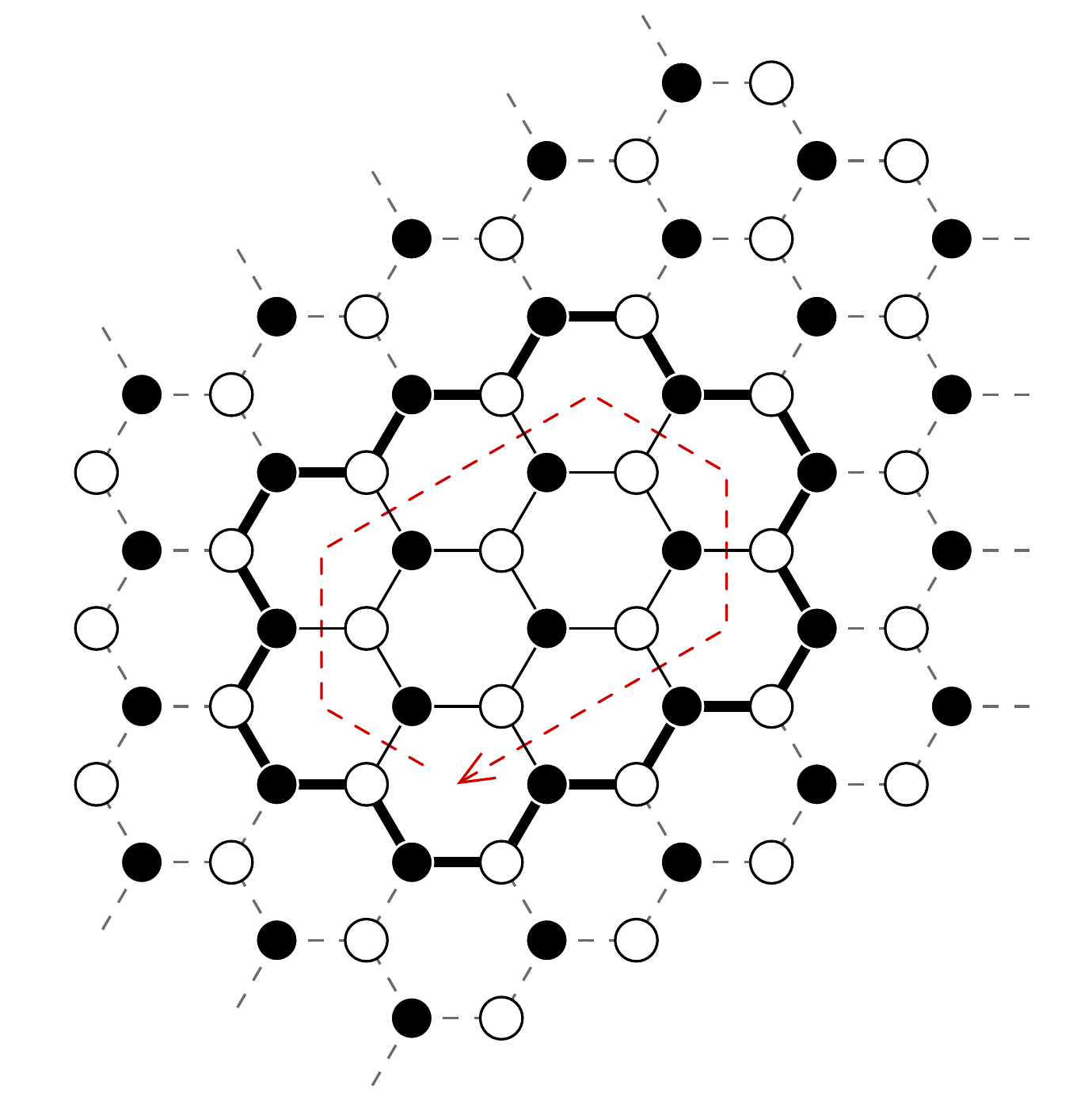}\,\,\,\,\,\,\,\,\includegraphics[width=0.4 \textwidth]{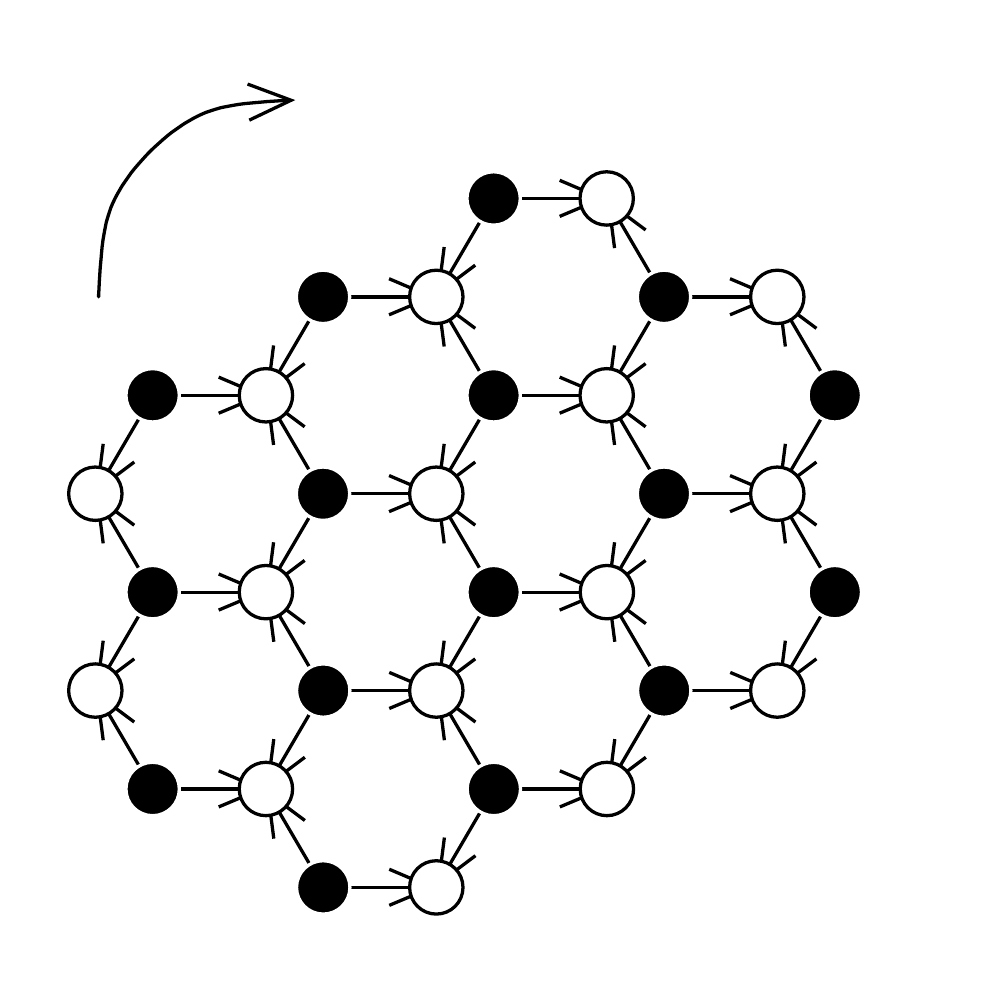}\caption{The hexagonal sub-graph $G_{2,2,3}\subset\mathscr{H}$ obtained by traversing the dotted path in red (left), together with the admissible orientation arising from directing its edges from black vertices to white (right).}\label{fig:HexSub}
\end{figure}
\subsection{Kasteleyn's method for sub-graphs with interior vertices removed}\label{subsec:KastHoles}
Consider two single vertices within $G$. If they lie on the same face then we say that they are \emph{connected via a face} (otherwise they are deemed to be \emph{unconnected}). Let $V:=\cup_{i}V_i$ be the set consisting of $k$-many black and $k$-many white vertices in $G$, where each $V_i$ is a \emph{connected} set of vertices (by which we mean either $V_i$ consists of a single vertex or for any $v\in V_i$, there exists at least one other $v'\in V_i, v'\neq v$ such that $v$ and $v'$ are connected via a face). Further to this we suppose that for any $v\in V_i$ and $v'\in V_j, j\neq i$, $v$ and $v'$ are unconnected. The set $V$ is thus an \emph{unconnected union} of connected sets of vertices. By removing $V$ from $G$ (together with all edges incident to those vertices in $V$) we obtain an hexagonal sub-graph of $\mathscr{H}$ that contains a set of unconnected gaps in its interior. We denote such a region $G\setminus V$ (see Figure~\ref{fig:AdmisPres}, centre and right). 

A natural question that now arises is whether the orientation of the edges that remain in $G\setminus V$ is again admissible. Suppose $V$ consists of a single vertex. Removing this vertex yields an oriented face in $G\setminus V$ that is not admissible, however it is easy to see that by removing a pair of vertices connected via a face we obtain a graph with gaps that is again admissibly oriented. This argument may be extended to larger sets of vertices, thus it follows that if $V$ is an unconnected union of connected sets of vertices where the parity of the number of white and black vertices in each connected set is the same, then $G\setminus V$ is admissibly oriented and we call $V$ an \emph{admissibility preserving} set of vertices (see Figure~\ref{fig:AdmisPres}).

\begin{rmk}\label{rmk:Induc}
	Suppose we remove a set of vertices $V$ from $G$. It is quite possible that in doing so, some subset of the vertices (say, $V'$) that remain in $G$ have matchings that are forced between them. In such a situation we may as well remove these extra vertices entirely, as they are fixed in every single matching of the remaining graph. If the set of \emph{induced holes} $V\cup V'$ is admissibility preserving then we call $V$ an \emph{admissibility inducing} set of vertices\footnote{It should be clear that in order for a matching of $G\setminus V$ to exist it must be the case that $V$ consists of an equinumerous number of white and black vertices.}.
\end{rmk}

\begin{lem}\label{lem:AdmissPres}
	For an hexagonal graph $G$ and an admissibility inducing set of vertices $V$ contained in its interior
	$$M(G\setminus V)=|\det(A_{G\setminus V})|,$$
	where $A_{G\setminus V}$ is the bi-adjacency matrix of $G\setminus V$.
\end{lem}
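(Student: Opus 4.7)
My plan is to reduce the admissibility inducing case to the admissibility preserving case, since in the latter Kasteleyn's theorem from Section~\ref{subsec:KastMeth} applies directly: when $G\setminus V$ inherits an admissible orientation from $G$, the bi-adjacency matrix $A_{G\setminus V}$ is already its own Kasteleyn matrix (no extra signs are needed, because all edges remain oriented from black to white), so Kasteleyn's theorem yields $|\det(A_{G\setminus V})|=M(G\setminus V)$ immediately. All of the work therefore sits in the case where $V$ is admissibility inducing but not itself admissibility preserving, i.e.\ where removing $V$ creates a non-empty set $V'$ of forced partners as in Remark~\ref{rmk:Induc}.

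For this case I would show separately that each side of the claimed equality is unchanged by passing from $G\setminus V$ to $G\setminus(V\cup V')$. On the matching side this is immediate: every perfect matching of $G\setminus V$ must contain the forced edges joining pairs of vertices of $V'$, and deleting these edges sets up a bijection with perfect matchings of $G\setminus(V\cup V')$, so $M(G\setminus V)=M(G\setminus(V\cup V'))$. On the determinant side I would order the vertices of $V'$ as a sequence of pairs $(v_1,u_1),(v_2,u_2),\ldots,(v_s,u_s)$ such that, for each $i$, the vertex $v_i$ has exactly one neighbour in $G$ with $V\cup\{v_1,u_1,\ldots,v_{i-1},u_{i-1}\}$ already removed, namely $u_i$. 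Such an ordering exists precisely because $V'$ is, by definition, the cascade of vertices whose matches are successively forced when one deletes $V$. The row (or column, depending on colour) of the bi-adjacency matrix of $G\setminus(V\cup\{v_1,u_1,\ldots,v_{i-1},u_{i-1}\})$ indexed by $v_i$ then has exactly one non-zero entry, namely $\pm 1$ at position $u_i$, and Laplace expansion along it gives
$$\det\bigl(A_{G\setminus(V\cup\{v_1,u_1,\ldots,v_{i-1},u_{i-1}\})}\bigr)=\pm\det\bigl(A_{G\setminus(V\cup\{v_1,u_1,\ldots,v_i,u_i\})}\bigr).$$
Iterating $s$ times yields $|\det(A_{G\setminus V})|=|\det(A_{G\setminus(V\cup V')})|$.

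Combining this with the admissibility preserving case applied to $V\cup V'$ (which is admissibility preserving by hypothesis) completes the proof:
$$M(G\setminus V)=M(G\setminus(V\cup V'))=|\det(A_{G\setminus(V\cup V')})|=|\det(A_{G\setminus V})|.$$

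The main technical point I expect to have to justify carefully is the existence of the pair-ordering of $V'$ described above: one needs to verify that at every stage of the peeling procedure there is at least one remaining vertex of $V'$ of degree one, that the procedure terminates, and that the two colour classes of $V\cup V'$ are equinumerous (so that the determinants being compared are genuinely square matrices of the same size). All of this is encoded in the admissibility preserving hypothesis on $V\cup V'$ together with the very definition of `forced induced hole', but spelling it out rigorously --- rather than simply invoking the informal language of Remark~\ref{rmk:Induc} --- is where the small amount of real combinatorial work lies.
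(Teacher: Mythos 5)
Your proposal is correct and follows essentially the argument the paper intends: the paper states the lemma without a formal proof, relying on the preceding discussion (Kasteleyn's theorem applied to the admissibly oriented graph $G\setminus(V\cup V')$ together with Remark~\ref{rmk:Induc} on forced matchings). Your explicit cofactor-expansion argument showing $|\det(A_{G\setminus V})|=|\det(A_{G\setminus(V\cup V')})|$ makes precise a step the paper leaves implicit, but it is the same reduction, not a different route.
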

\begin{rmk}\label{rmk:Mat}
	The bi-adjacency matrix $A_{G\setminus V}$ is obtained by simply deleting from $A_G$ those rows and columns indexed by the black and white vertices in $V$ (respectively).
\end{rmk}
\begin{figure}[t!]
	\includegraphics[width=0.33 \textwidth]{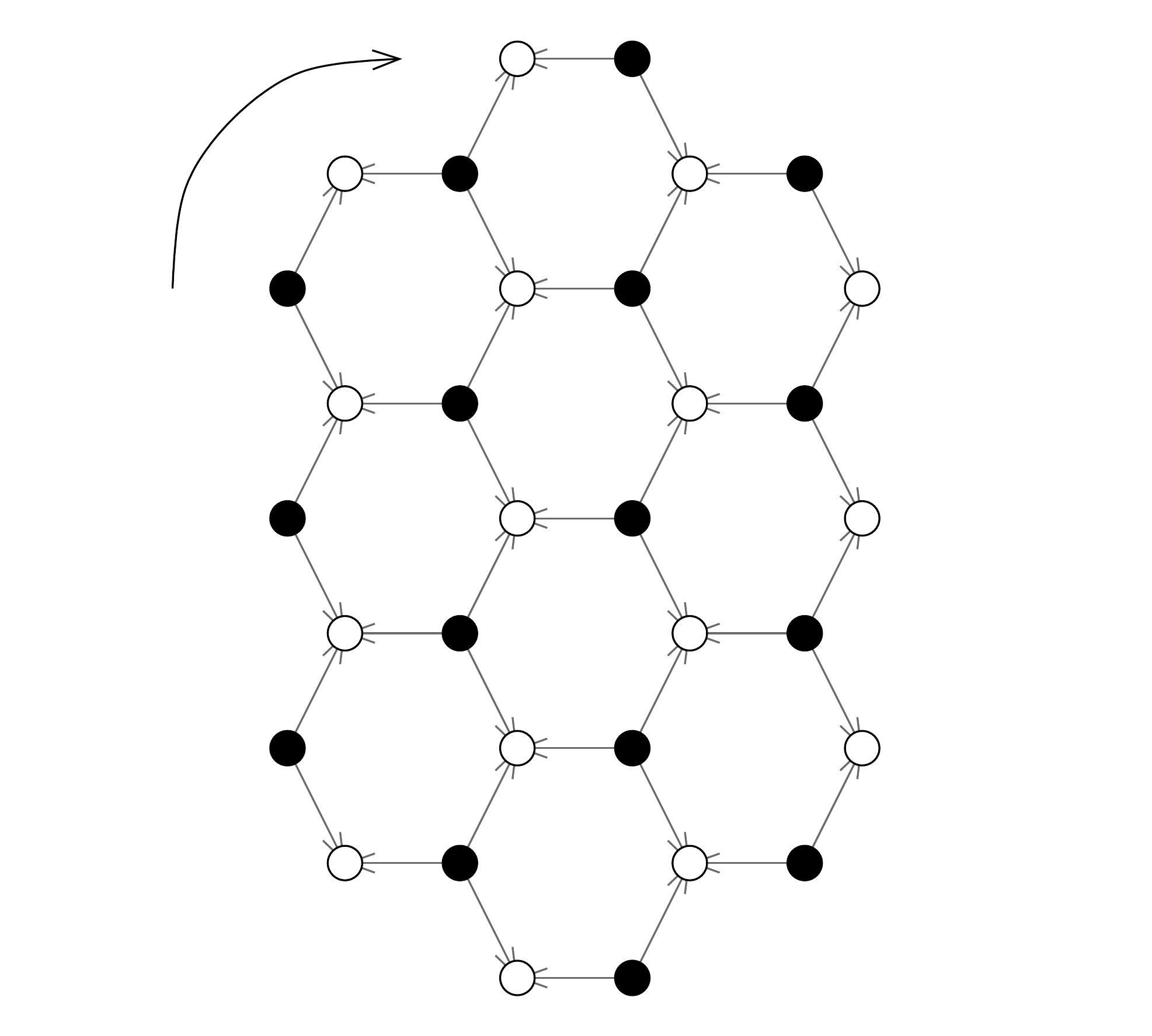}\includegraphics[width=0.33 \textwidth]{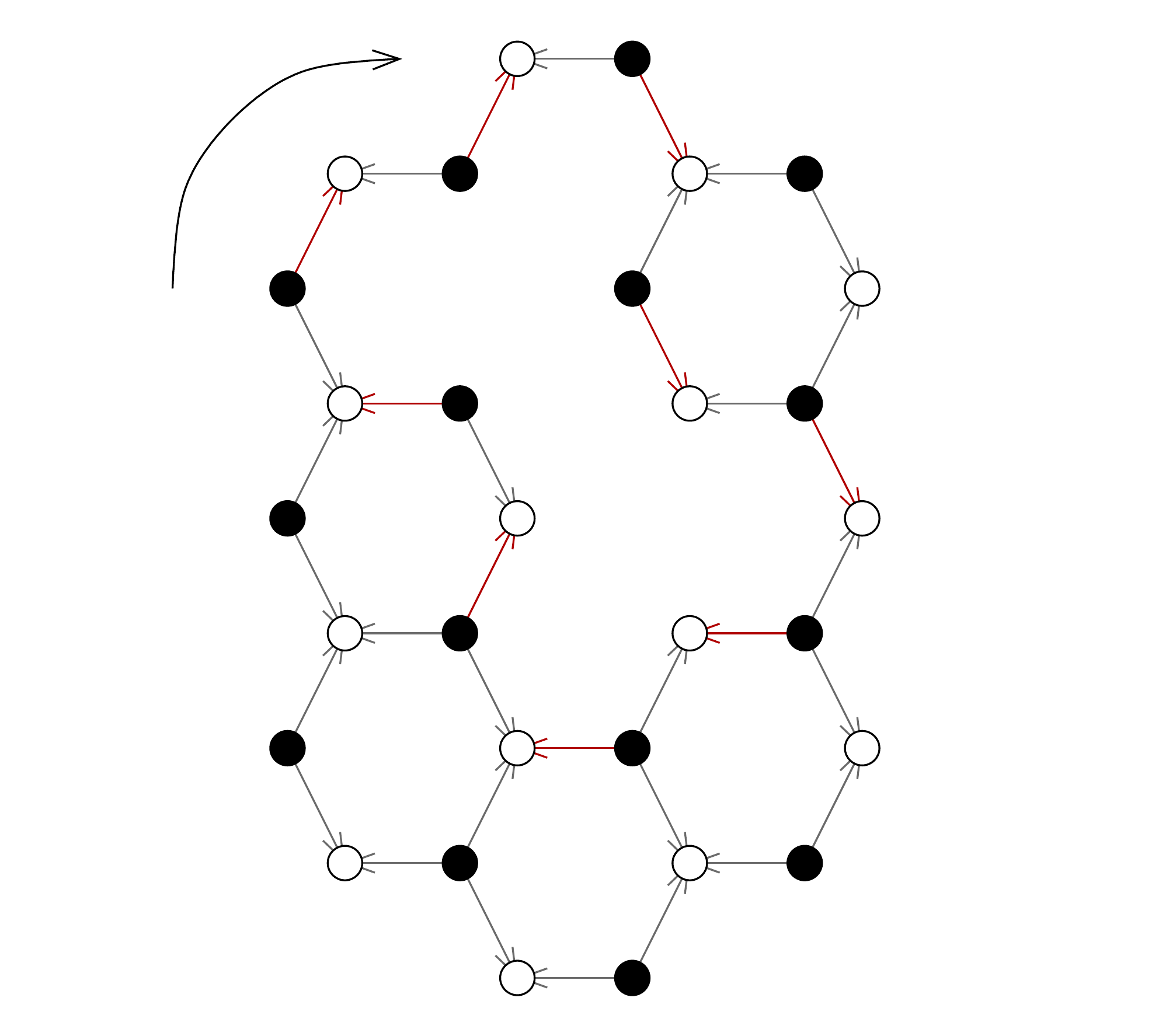}\includegraphics[width=0.33 \textwidth]{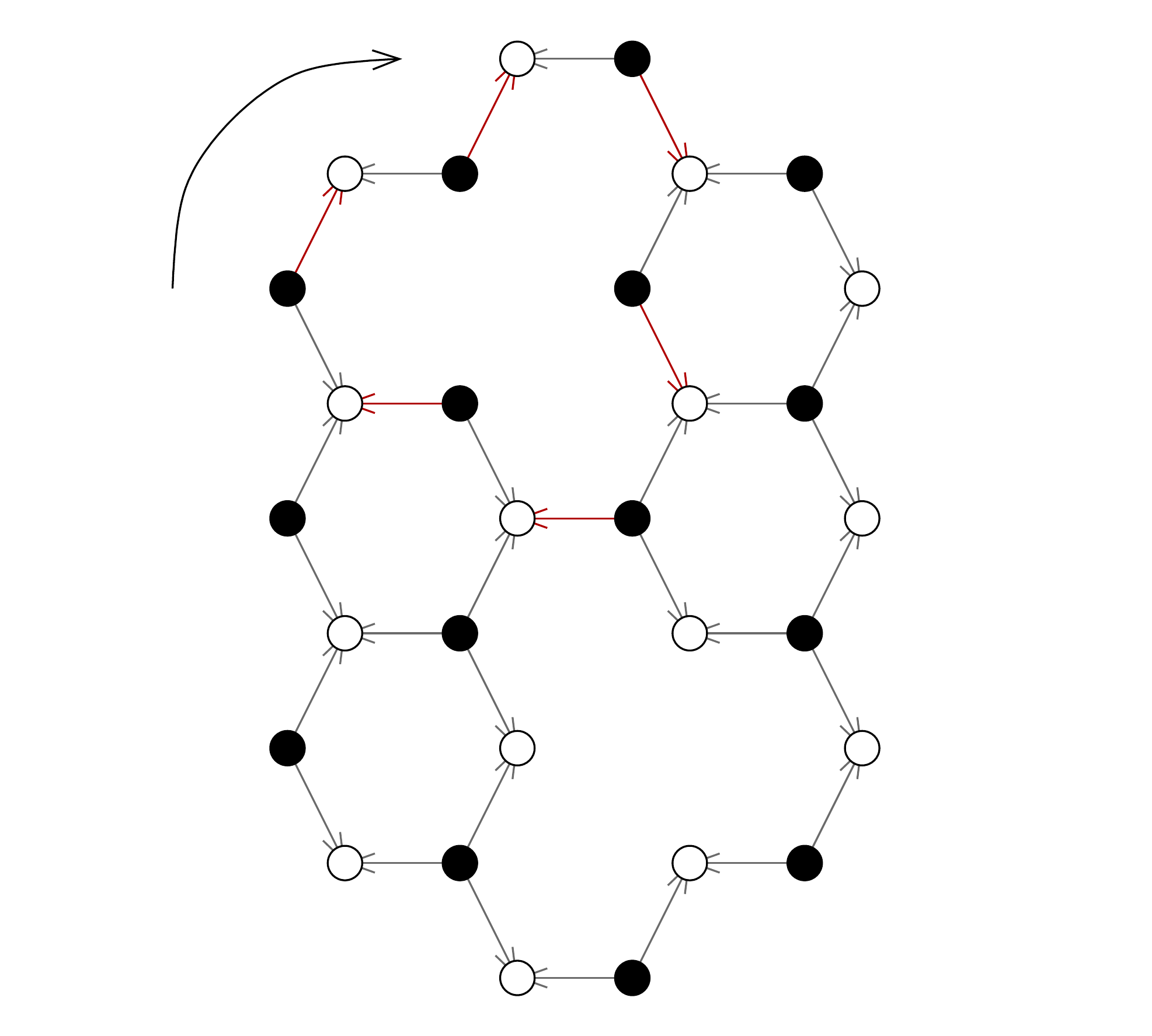}\caption{The admissibly oriented hexagonal sub-graph $G_{2,3,2}$ (left) from which a pair of admissibility preserving vertices have been removed (centre), and the same sub-graph with vertices removed that do not preserve the admissibility of its orientation.}\label{fig:AdmisPres}
\end{figure}
Our goal is to give an expression for $M(G\setminus V)$ that contains a determinant whose size is dependent on the size of the set $V$. Clearly if $|V|=2k$ (that is, $V$ consists of $k$-many black and $k$-many white vertices) then the expression in Lemma~\ref{lem:AdmissPres} gives $M(G\setminus V)$ as a determinant evaluation of a matrix whose size is dependent on the number of vertices that remain in $G\setminus V$, rather than those that have been removed\footnote{$A_{G\setminus V}$ is an $(ab+bc+ca-k)\times(ab+bc+ca-k)$ matrix.}. 

In his notes on dimer statistics Kenyon~\cite[Theorem~6]{Kenyon97} gives an alternative way of evaluating the determinant from the previous lemma.
\begin{thm*}[Kenyon]
	Suppose $V$ is a set of vertices contained in $G$ and let $(A_{G}^{-1})_V$ denote the sub-matrix obtained by restricting the inverse of $A_G$ to those rows and columns indexed by the white and black (respectively) vertices in $V$. Then
	$$|\det(A_{G}\setminus V)|=|\det(A_G)\cdot\det((A_{G}^{-1})_V)|.$$
\end{thm*}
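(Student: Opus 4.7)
The statement is an instance of Jacobi's complementary minor formula, which relates a minor of an invertible matrix $A$ to the complementary minor of $A^{-1}$. My plan is to derive it through a Schur complement computation on $A_G$ after regrouping the indices associated with $V$.

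Concretely, I would begin by simultaneously permuting the rows and columns of $A_G$ so that the $k$ black vertices of $V$ index the first $k$ rows, and the $k$ white vertices of $V$ index the first $k$ columns. Such a simultaneous permutation multiplies the determinant only by $\pm 1$, which disappears once we take absolute values. After this reordering we can write
\[
A_G=\begin{pmatrix}P&Q\\ R&S\end{pmatrix},
\]
where $S$ is $(n-k)\times(n-k)$ and corresponds precisely to $A_{G\setminus V}$ (here $n=ab+bc+ca$). The standard block-determinant identity gives
\[
\det(A_G)=\det(S)\cdot\det\!\bigl(P-Q S^{-1} R\bigr),
\]
provided $S$ is invertible. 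On the other hand, the block-inverse formula shows that the top-left $k\times k$ block of $A_G^{-1}$ is $(P-QS^{-1}R)^{-1}$, and under our reordering this block is exactly $(A_G^{-1})_V$. Therefore
\[
\det\bigl((A_G^{-1})_V\bigr)=\frac{1}{\det(P-QS^{-1}R)}.
\]
Multiplying the two displayed identities yields $\det(A_G)\cdot\det\bigl((A_G^{-1})_V\bigr)=\pm\det(A_{G\setminus V})$, and taking absolute values delivers the claim.

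The main obstacle is the hidden invertibility assumption: the block-determinant formula I used requires $S=A_{G\setminus V}$ to be invertible, which need not be the case a priori (for example, $G\setminus V$ might have no perfect matching at all, in which case both sides of the desired identity are zero and the formula is trivially true). I would handle this by a standard density/continuity argument, replacing $A_G$ with $A_G+\epsilon I$ in a polynomial ring extension so that all relevant square blocks become invertible, applying the identity there, and then specialising $\epsilon\to 0$; both sides of the identity are polynomials in the entries of $A_G$, so the equality persists. Alternatively, one can bypass the Schur complement route entirely by giving a direct cofactor-expansion proof of the Jacobi complementary minor identity, which requires only that $A_G$ itself be invertible.
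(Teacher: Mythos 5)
The paper does not prove this statement at all: it is quoted verbatim from Kenyon's notes (\cite{Kenyon97}, Theorem~6) and used as a black box, so there is no internal argument to compare yours against. Judged on its own terms, your proof is correct and is the standard route: the identity is exactly Jacobi's complementary minor formula, and the Schur-complement derivation you give establishes it. Your bookkeeping of the index sets is right --- rows of $A_G$ are indexed by black vertices and columns by white, so rows of $A_G^{-1}$ are indexed by white and columns by black, and after your reordering the top-left block of $A_G^{-1}$ is indeed $(A_G^{-1})_V$ while the bottom-right block of $A_G$ is $A_{G\setminus V}$; note also that the row permutation and the column permutation are in general \emph{different} permutations, but each contributes only a sign, which your absolute values absorb. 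Two small points deserve tightening. First, the blocks $P$ and $(A_G^{-1})_V$ are square only because $V$ contains equally many black and white vertices; this is guaranteed by the paper's setup but is worth saying. Second, in your degenerate-case argument the phrase ``both sides of the identity are polynomials in the entries of $A_G$'' is not literally true, since $A_G^{-1}$ involves division by $\det(A_G)$; the clean fix is to rewrite $\det\bigl((A_G^{-1})_V\bigr)=\det\bigl((\operatorname{adj}A_G)_V\bigr)/\det(A_G)^{k}$, so that the identity becomes the genuinely polynomial statement $\det\bigl((\operatorname{adj}A_G)_V\bigr)=\pm\det(A_G)^{k-1}\det(A_{G\setminus V})$, which then extends from the dense set where $S$ is invertible to all of matrix space. (Here $\det(A_G)\neq 0$ by MacMahon's formula, so dividing back out is legitimate.) With those cosmetic repairs the proof is complete, and your closing remark is apt: the cofactor proof of Jacobi's identity needs only the invertibility of $A_G$ and avoids the perturbation entirely.
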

\begin{rmk}
	Observe that the above theorem holds irrespective of whether $V$ is admissibility inducing or not.
\end{rmk}
We already have a closed form evaluation for $|\det(A_G)|$, it is the well-known and celebrated formula due to MacMahon~\cite{MacMahon16}
$$|\det(A_G)|=\prod_{i=1}^a\prod_{j=1}^b\prod_{k=1}^c\frac{i+j+k-1}{i+j+k-2},$$
thus what remains is to determine the entries of the matrix $A_{G}^{-1}$ (referred to as the \emph{inverse Kasteleyn matrix of} $G$). 
\begin{rmk}\label{rmk:MacM}
	It should be noted that MacMahon's original formula came not from considering tilings, but instead arose from his interest in enumerating \emph{boxed plane partitions}. The bijection that exists between plane partitions that fit inside an $a\times b\times c$ box and rhombus tilings of $H$ is quite beautiful, however we shall not discuss it here.
\end{rmk}
If $b_j$ and $w_i$ are two vertices in $V$ then according to Cramer's rule the $(w_i,b_j)$-entry of $A_{G}^{-1}$ is
$$(-1)^{i+j}\cdot\frac{\det(A_{G\setminus\{b_j,w_i\}})}{\det(A_G)}.$$
 
For a pair of vertices $b_j,w_i$ that are admissibility inducing, the graph $G\setminus \{b_j,w_i\}$ is admissibly oriented and the numerator above gives (up to sign) $M(G\setminus \{b_j,w_i\})$. If, however, $b_j$ and $w_i$ are unconnected then $G\setminus\{b_j,w_i\}$ is not admissibly oriented, and so $|\det(A_{G\setminus\{b_j,w_i\}})|$ counts instead the number of \emph{signed perfect matchings\footnote{The determinant is a sum over perfect matchings where each summand has a certain sign, this gives rise to the term.} of $G$} (see Cook and Nagel~\cite{CookNagel15} for further details).
\begin{figure}[t!]
	\includegraphics[scale=0.5]{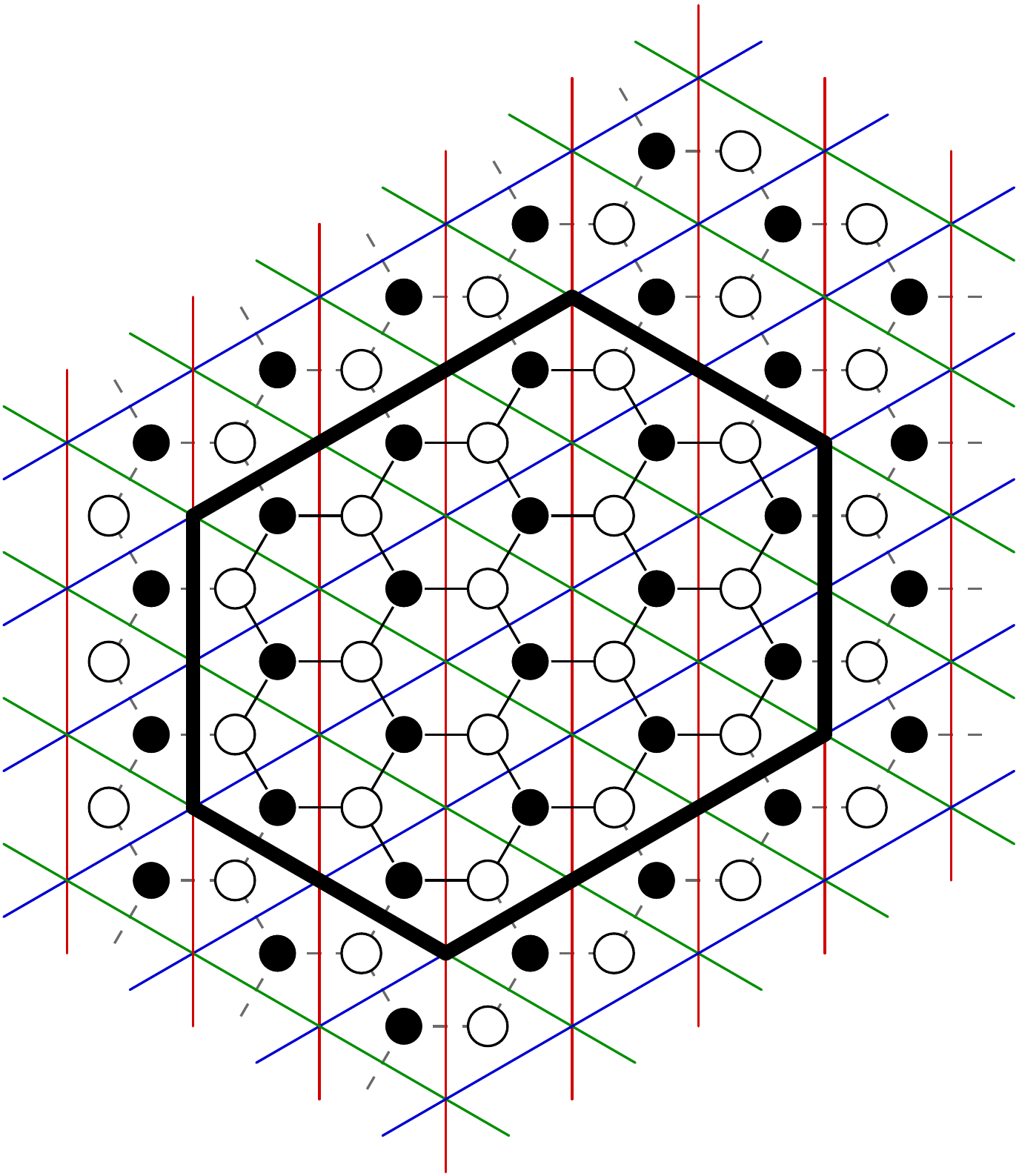}\caption{The ``dual" of the hexagonal lattice is given by the coloured lines: those that are green belong to $L_-$; those that are blue to $L_+$; and those that are red to $L_{\infty}$. The region corresponding to $G_{2,2,3}\subset\mathscr{H}$ is the hexagon $H_{2,2,3}\subset\mathscr{T}$ outlined in black.}\label{fig:Dual}
\end{figure}
If we remain on the hexagonal lattice, viewing this problem purely from the stand-point of perfect matchings, the way forward appears somewhat murky. Calculating an entry of the inverse Kasteleyn matrix of $G$ involves taking the determinant of a sub-matrix obtained by deleting a single row and column from $A_G$, furthermore each entry corresponds to removing a unique row and column combination. It turns out, however, that the determinant of $A_{G\setminus\{b_j,w_i\}}$ may be replaced with the determinant of a so-called \emph{lattice path matrix} (see Section~\ref{sec:RhombTil}) that arises from translating perfect matchings of $G\setminus V$ into rhombus tilings of holey hexagons, which are in turn translated into families of non-intersecting lattice paths. Under this replacement the different entries of $A_{G}^{-1}$ are computed by taking the determinant of lattice path matrices that differ only in their last row and column; we shall soon see how this simplifies our task of finding a closed form expression for $\det(A_{G\setminus\{b_j,w_i\}})$.

\section{Rhombus tilings on the triangular lattice and families of non-intersecting paths}\label{sec:RhombTil}

Let us return to the tiling of the plane by regular hexagons discussed at the beginning of Section~\ref{subsec:KastHex}. Imagine we place a point at the centre of each hexagonal face and join with a straight line all pairs of points that are located within two different faces that share a common edge. Once the hexagonal tiles have been removed what remains is a tiling of the plane by right and left pointing unit equilateral triangles, which is known as the \emph{unit triangular lattice}\footnote{This is sometimes referred to as the \emph{dual} of $\mathscr{H}$.} and shall be denoted $\mathscr{T}$. This consists of three (infinite) families of lines $L_+, L_{-}, L_{\infty}$, where in each family all lines have the same gradient: $L_+$ consists of a set of lines in the polar direction $\pi/6$, separated by a distance of $\sqrt{3}$ in the horizontal direction; $L_-$ is the family of lines in the polar direction $-\pi/6$, separated by a unit distance along the lines in $L_+$; and $L_{\infty}$ consists of a family of vertical lines that intersect all points where the lines in $L_-$ and $L_+$ intersect (see Figure~\ref{fig:Dual}). 

Under this construction the set of all vertices that are in the same colour class in $\mathscr{H}$ correspond to the set of all unit triangles on $\mathscr{T}$ that point in one direction, so without loss of generality we may assume that black vertices in $\mathscr{H}$ correspond to left pointing unit triangles in $\mathscr{T}$. Furthermore the region $G\subset\mathscr{H}$ corresponds to a semi-regular hexagon with sides of length $a,b,c,a,b,c$ (going clockwise from the south-west side) on $\mathscr{T}$. For specific $a,b,c$ we shall denote such a region $H_{a,b,c}$, otherwise in the general case we shall denote it simply by $H$. It follows that a matching between a black and a white vertex in $\mathscr{H}$ corresponds to joining together a pair of unit triangles (one left pointing, one right pointing) that share precisely one edge in $\mathscr{T}$ (hence forming a unit rhombus), thus perfect matchings of $G\setminus V$ are in bijection with \emph{rhombus tilings\footnote{We shall often refer to rhombus tilings simply as \emph{tilings}.} of $H\setminus T$}, where $T$ is the set of unit triangles corresponding to the vertices in $V$ (see Figure~\ref{fig:HnT}).
\begin{figure}[t!]
	\includegraphics[width=0.33 \textwidth]{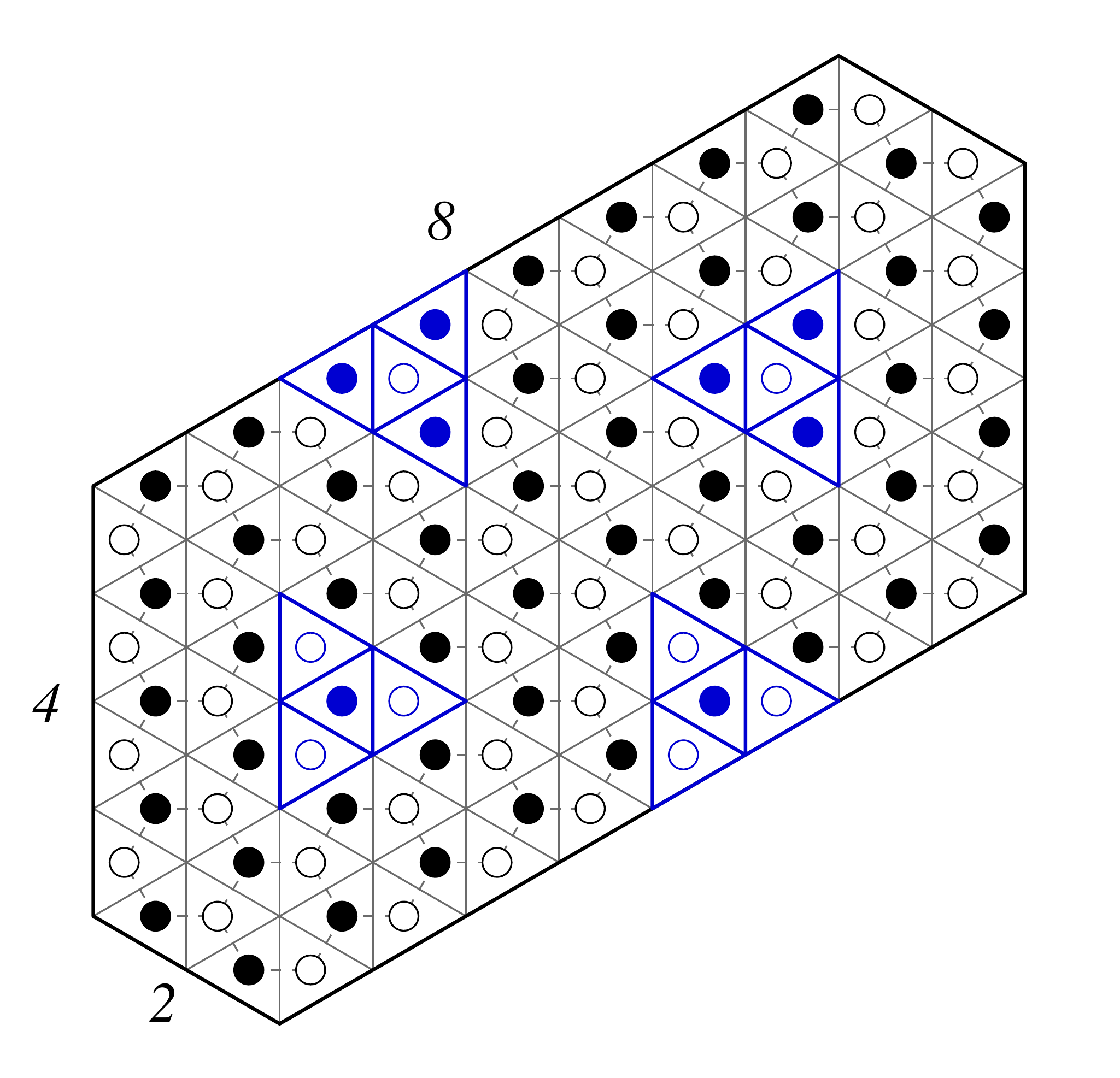}\includegraphics[width=0.33 \textwidth]{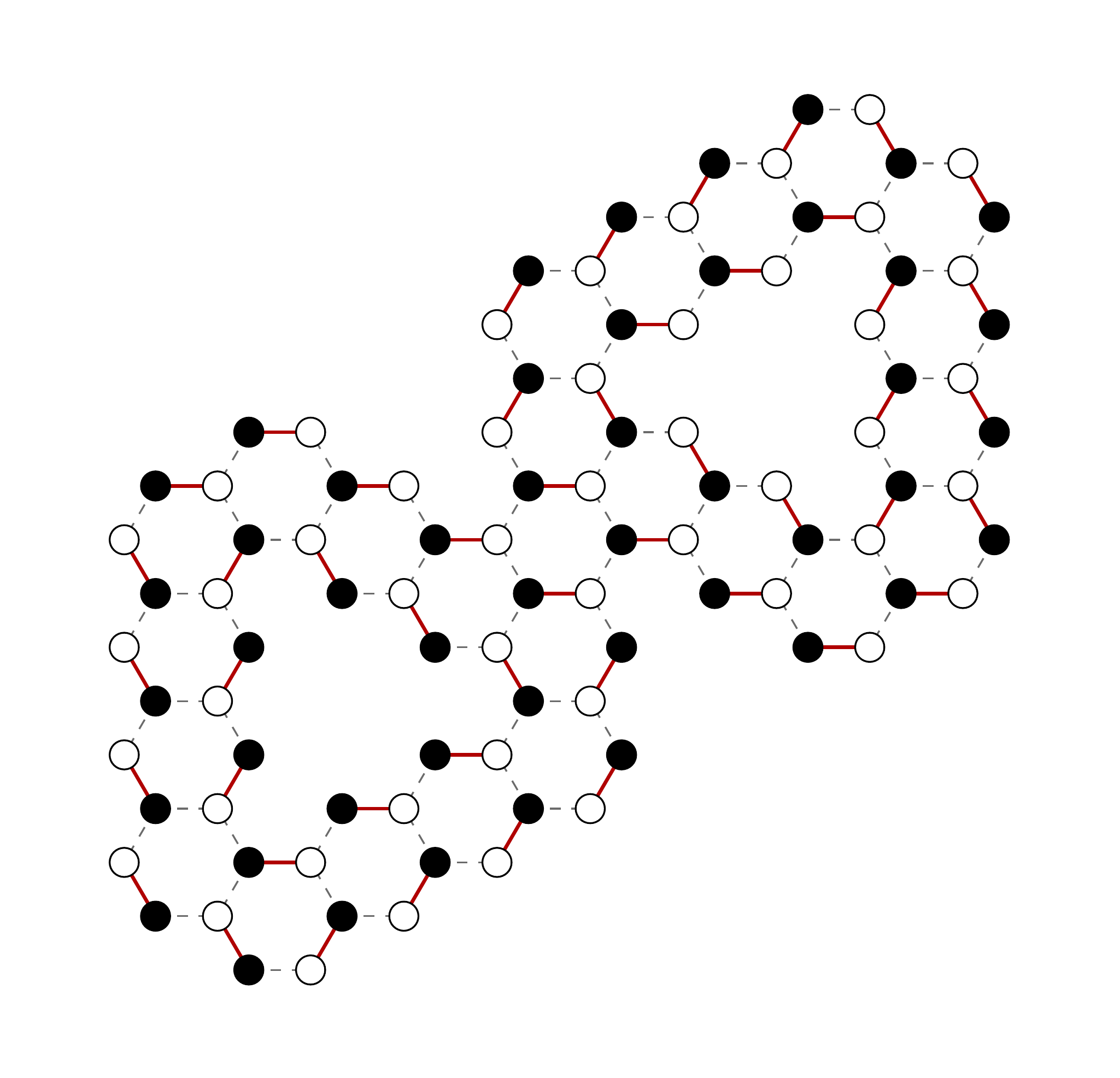}\includegraphics[width=0.33 \textwidth]{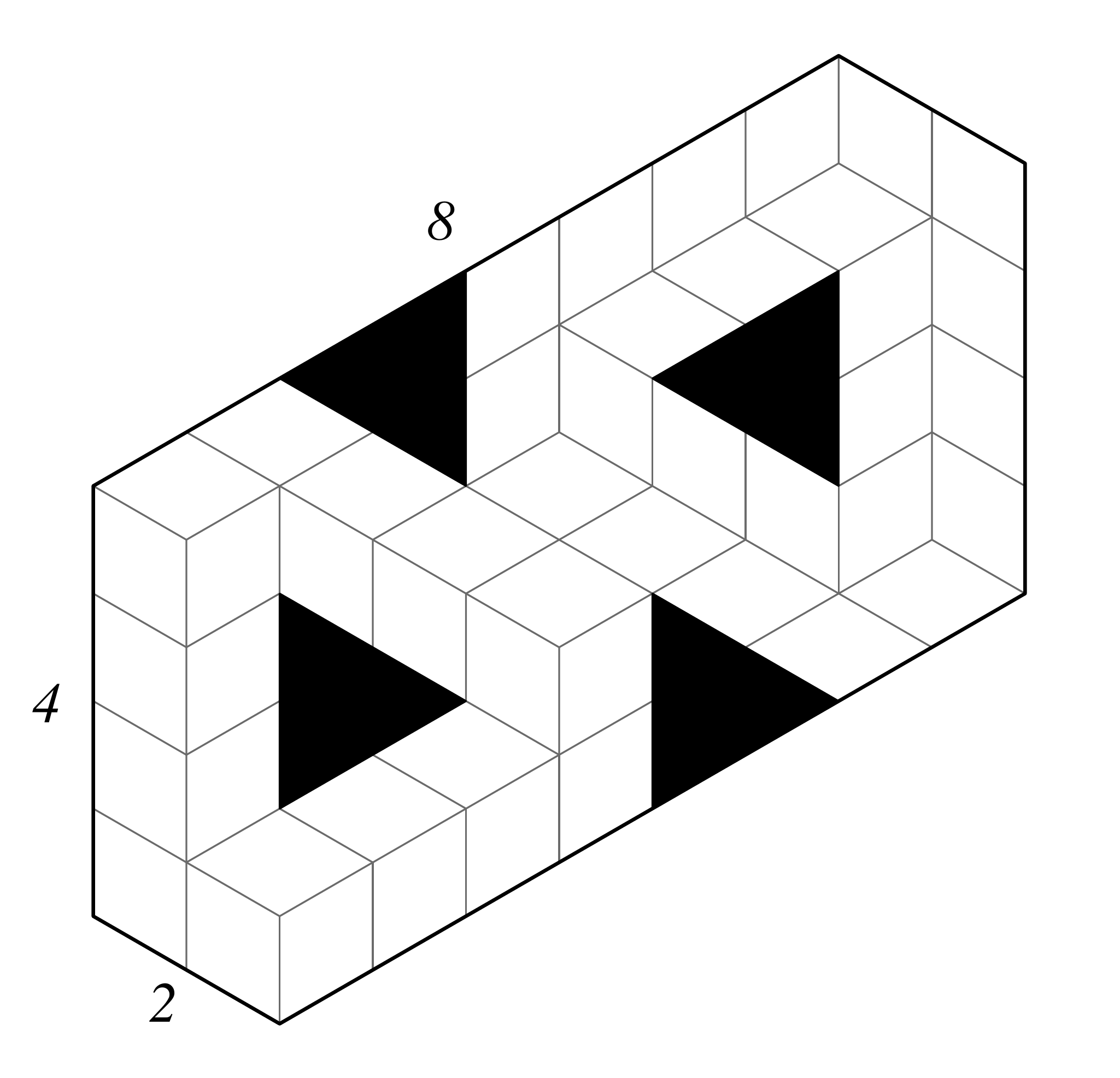}\caption{From left to right: the region $G_{2,4,8}\setminus V\subset\mathscr{H}$ where the admissibility preserving vertices $V$ are coloured blue (together with the corresponding sub-region of $\mathscr{T}$); a perfect matching of $G_{2,4,8}\setminus V$; the corresponding rhombus tiling of $H_{2,4,8}\setminus T$.}\label{fig:HnT}
\end{figure}
\begin{rmk}\label{rmk:charge}
	It should be observed that for an unconnected union of connected sets of vertices $V:=\cup_iV_i$, each subset $V_i\subset\mathscr{H}$ corresponds to a set $T_i$ of unit triangles in $\mathscr{T}$ that are \emph{connected via points or edges} (by which we mean that for any triangle $t\in T_i$ there exists at least one other triangle $t'\in T_i, t'\neq t,$ such that $t$ and $t'$ share an edge or touch at a point) and furthermore no triangle $t\in T_i$ is connected via an edge or a point to a triangle $t'\in T_j$, for $i\neq j$. It should be plain to see that if $V$ is admissibility preserving then the number of left and right pointing triangles in each of the sets corresponding to the $V_i$s is even, hence we shall refer to a set $T:=\cup_{i}T_i$ as a set of holes of even \emph{charge}\footnote{The \emph{charge} of a hole $T_i\in T$, denoted $q(T_i)$, is the difference between the number of right and left pointing unit triangles that comprise it.}. 
\end{rmk}
\begin{rmk}
	As in Remark~\ref{rmk:Induc}, a set of unit triangular holes $T$ may give rise to forced rhombi in tilings of $H\setminus T$. If we denote the unit triangles that comprise these rhombi $T'$, then the number of tilings of $H\setminus T$ is equal to the number of tilings of $H\setminus (T\cup T')$. If the set $T\cup T'$ corresponds to a set of holes of even charge then we say that $T$ is an \emph{even charge inducing} set of holes.
\end{rmk}

By considering perfect matchings of sub-graphs of $\mathscr{H}$ in terms of their equivalent representations on $\mathscr{T}$ we are afforded an entirely different perspective from which we may view tilings of $H\setminus T$. Within the folklore of the theory of plane partitions and rhombus tilings there exists a bijection that allows one to represent tilings of sub-regions of $H\setminus T$ as families of non-intersecting lattice paths. We recall this bijection in the following sections.

\subsection{A classical bijection}\label{subsec:ClassBij}

Take a rhombus tiling of $H\setminus T$ and place start (end, respectively) points at the mid-points of the south-west (north-east) side of each unit rhombus that lies along the south-west (north-east) edge. Apply the same procedure to those rhombi that lie along the north-east (south-west) edges of any holes that lie within its interior. We label the set of start points $S$ and the set of end points $E$.
\begin{figure}[t!]
	\includegraphics[width=0.4 \textwidth]{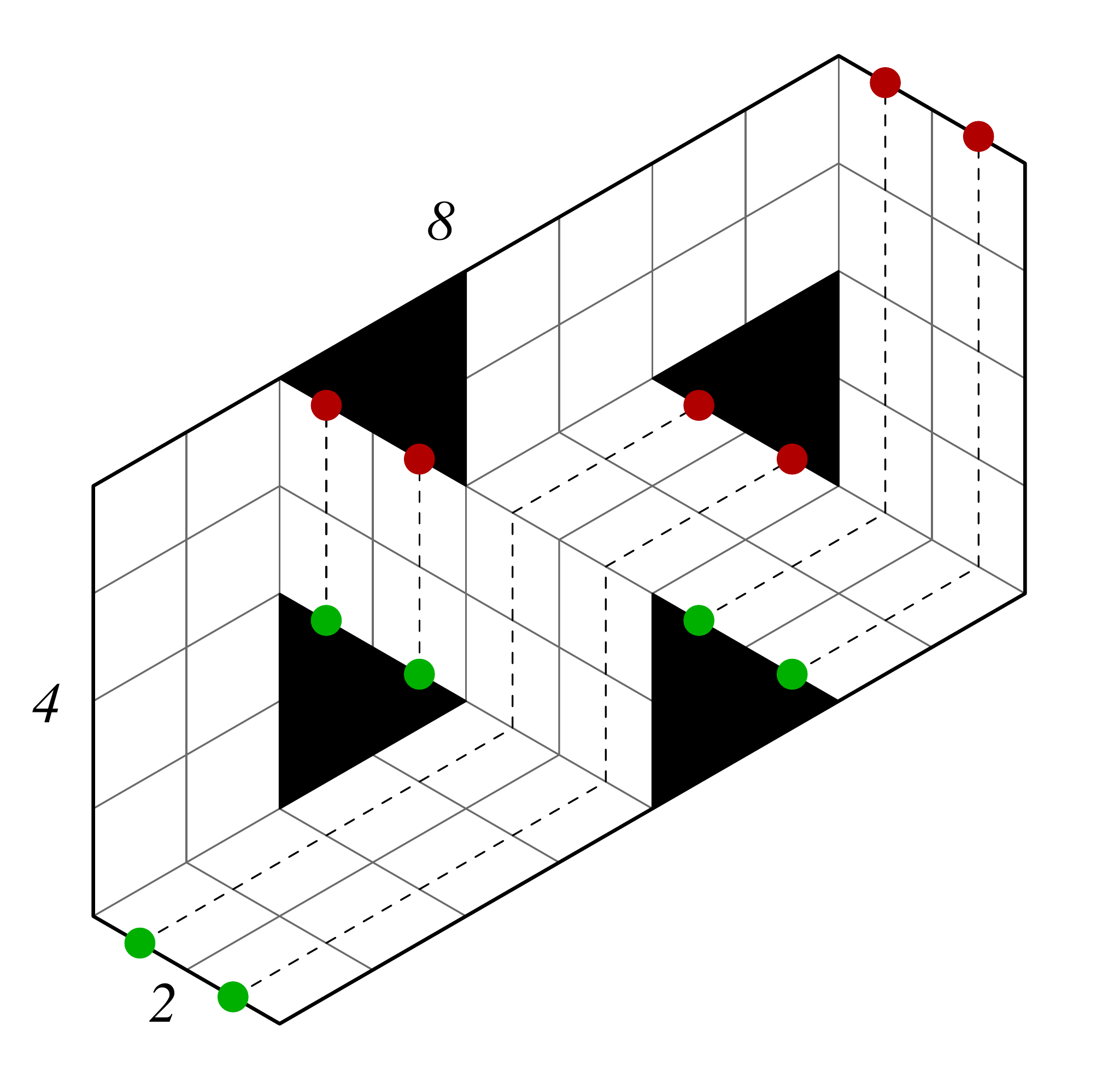}\,\,\,\,\,\,\,\,\,\,\,\,\includegraphics[width=0.4 \textwidth]{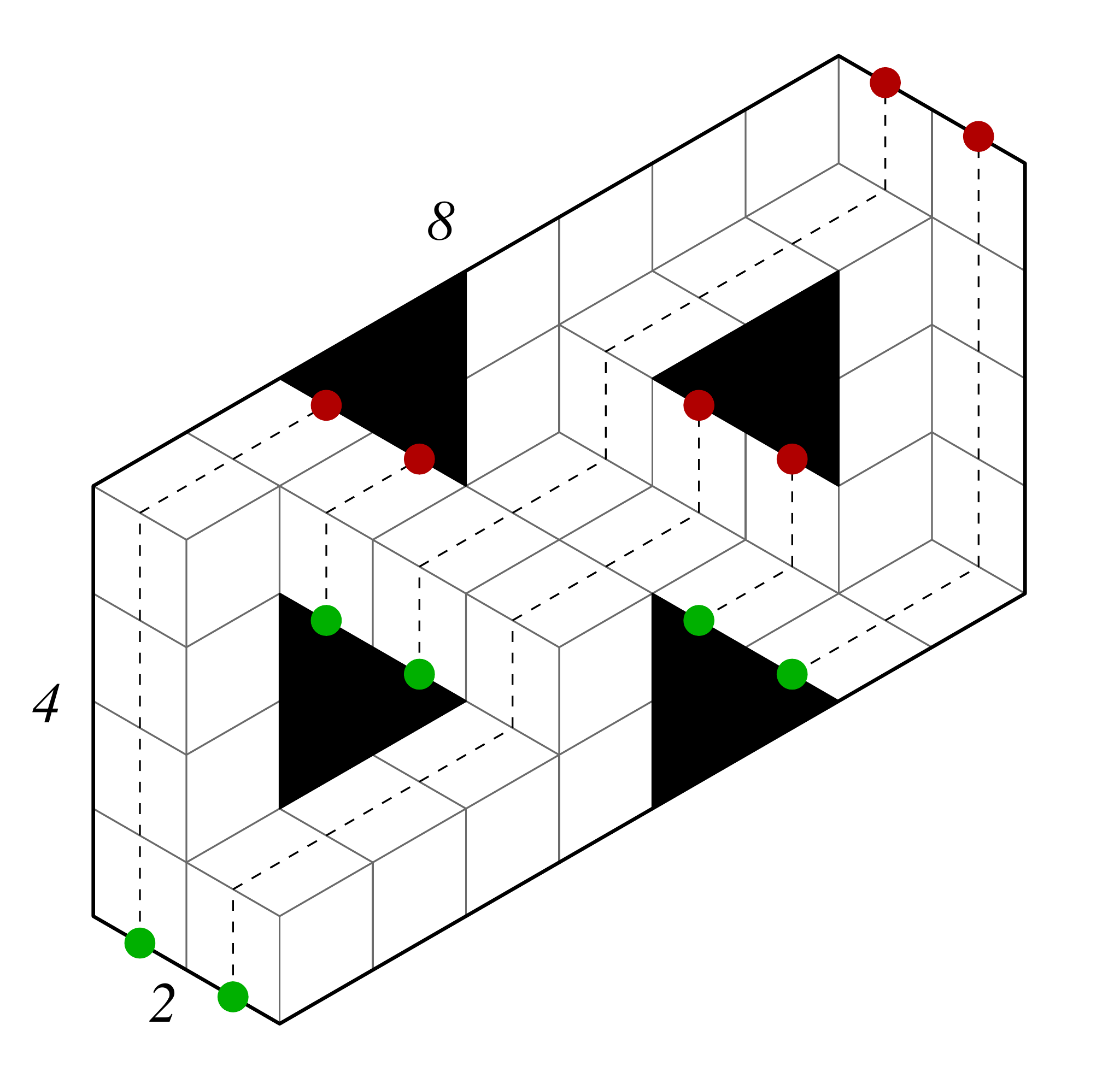}\caption{Two different tilings of $H_{2,4,8}\setminus T$ and the corresponding families of paths across rhombi (the start points are green, the end points red), where $T$ is a set of holes of even charge.}\label{fig:TwoTil}
\end{figure}

From a start point $s\in S$ we may construct a \emph{path across unit rhombi} by travelling from one side of a rhombus to its opposite parallel side, and then repeating this process across every rhombus we encounter until our path meets with some end point $e\in E$. By constructing such a path for every start point in $S$ we obtain a \emph{family of non-intersecting paths across unit rhombi}\footnote{Within this context \emph{non-intersecting} means that no two paths traverse a common rhombus.} that correspond to a particular rhombus tiling of $H\setminus T$. It follows that the set of rhombus tilings of $H\setminus T$ may be represented as a set of families of non-intersecting paths across unit rhombi, where every path traverses rhombi that are oriented in one of two ways (see Figure~\ref{fig:TwoTil}). Moreover it is easy to see that every rhombus contained in $H\setminus T$ that is oriented in one of these two directions is traversed by such a path and so a family of paths beginning at $S$ and ending at $E$ determines a tiling completely. These paths across rhombi may in turn be translated into non-intersecting lattice paths consisting of unit north and east steps on $\mathbb{Z}_{a,c}\times\mathbb{Z}_{a,b}$ (where $\mathbb{Z}_{p,q}$ denotes the set $\{x+y/2:x\in\mathbb{Z},y=p+q-1 \text{ (mod }2)\}$), however in order to state this bijection explicitly we must first introduce some notation so that we can specify each unit triangle contained in $H\setminus T$.

\subsection{Labelling the interior of the hexagon}

Consider the hexagonal region $H\setminus T$. We may place an origin $O$ at its centre, that is, at the intersection of the pair of lines that intersect the mid-points of two distinct pairs of parallel sides of $H\setminus T$ (for example, let $l_a,l_b$ be the lines intersecting the mid-points of the sides of length $a,b$ respectively and place $O$ at the intersection of $l_a$ and $l_b$). Let $h$ denote the horizontal line that intersects $O$. We proceed by labelling the lines in each of the families $L_+,L_-$, and $L_{\infty}$ according to their distance and location with respect to $O$ along $h$. Every line $l$ that intersects $h$ lies at a distance of $l_d\cdot(\sqrt{3}/2)$ from $O$, for some $l_d\in\mathbb{Z}/2$ (this lattice distance, $l_d$, is negative if the intersection lies to the left of $O$, positive if it lies to the right). We label each line $l\in L_-$ or $L_+$ with $l_d/2$, otherwise we label $l$ with $l_d$. The region $H\setminus T$ is thus the sub-region of $\mathscr{T}$ enclosed by the lines labelled $\pm\tfrac{b+c}{2}\in L_-$, $\pm\tfrac{a+b}{2}\in L_+$, and $\pm\tfrac{a+c}{2}\in L_{\infty}$. It follows that each triangle contained in $H\setminus T$ may be described by a triple $(l,l',l'')$ where $l\in\{-\frac{b+c}{2},1-\frac{b+c}{2},\dots,\frac{b+c}{2}\}$, $l'\in\{-\frac{a+b}{2},1-\frac{a+b}{2},\dots,\frac{a+b}{2}\}$, and $l''\in\{-\frac{a+c}{2},1-\frac{a+c}{2},\dots,\frac{a+c}{2}\}$ (see Figure~\ref{fig:LineLab}).

\begin{figure}[t]
	\includegraphics[width=0.4 \textwidth]{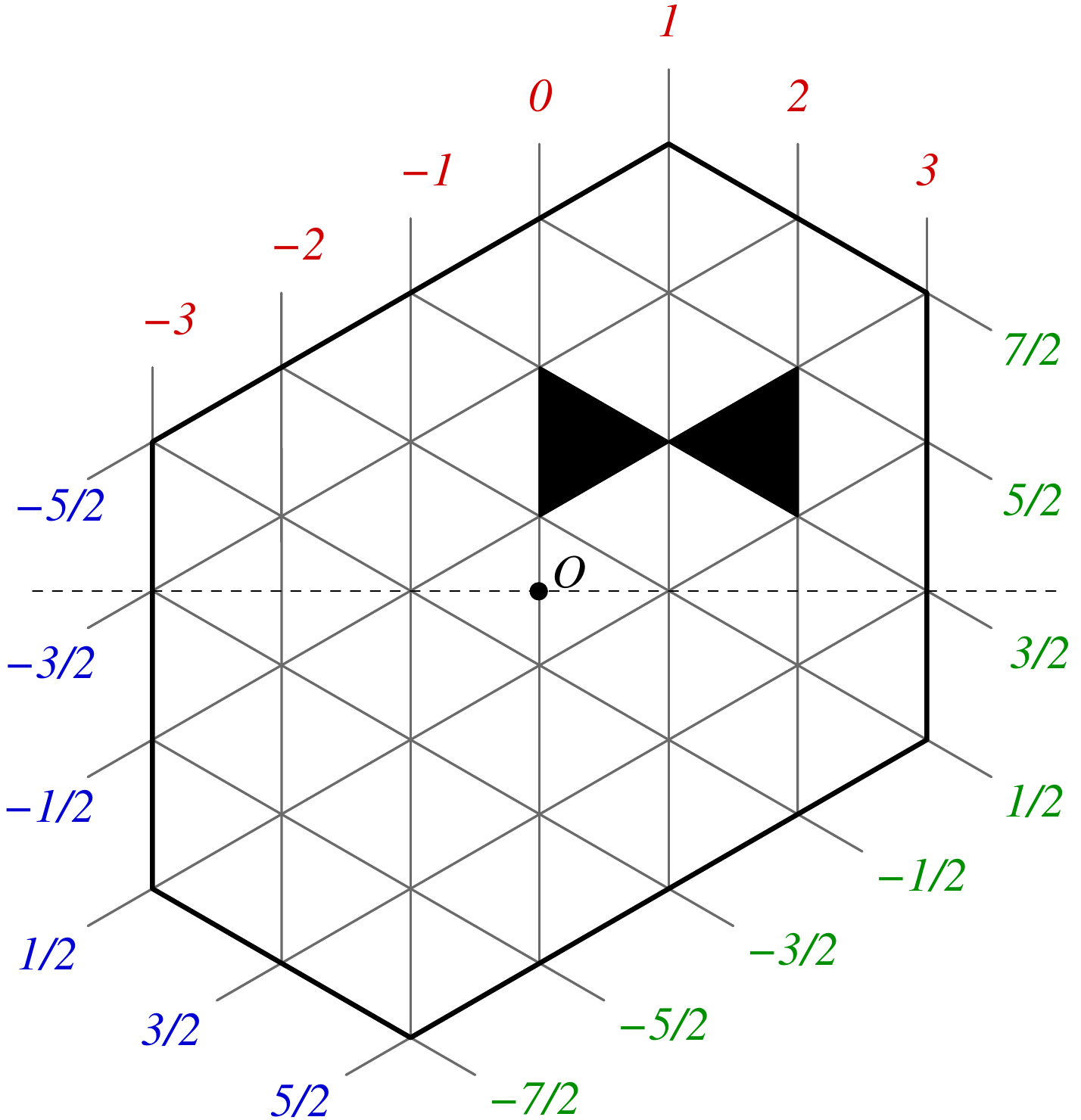}\,\,\,\,\,\,\,\,\,\,\,\,\,\,\,\,\,\,\,\,\includegraphics[width=0.4 \textwidth]{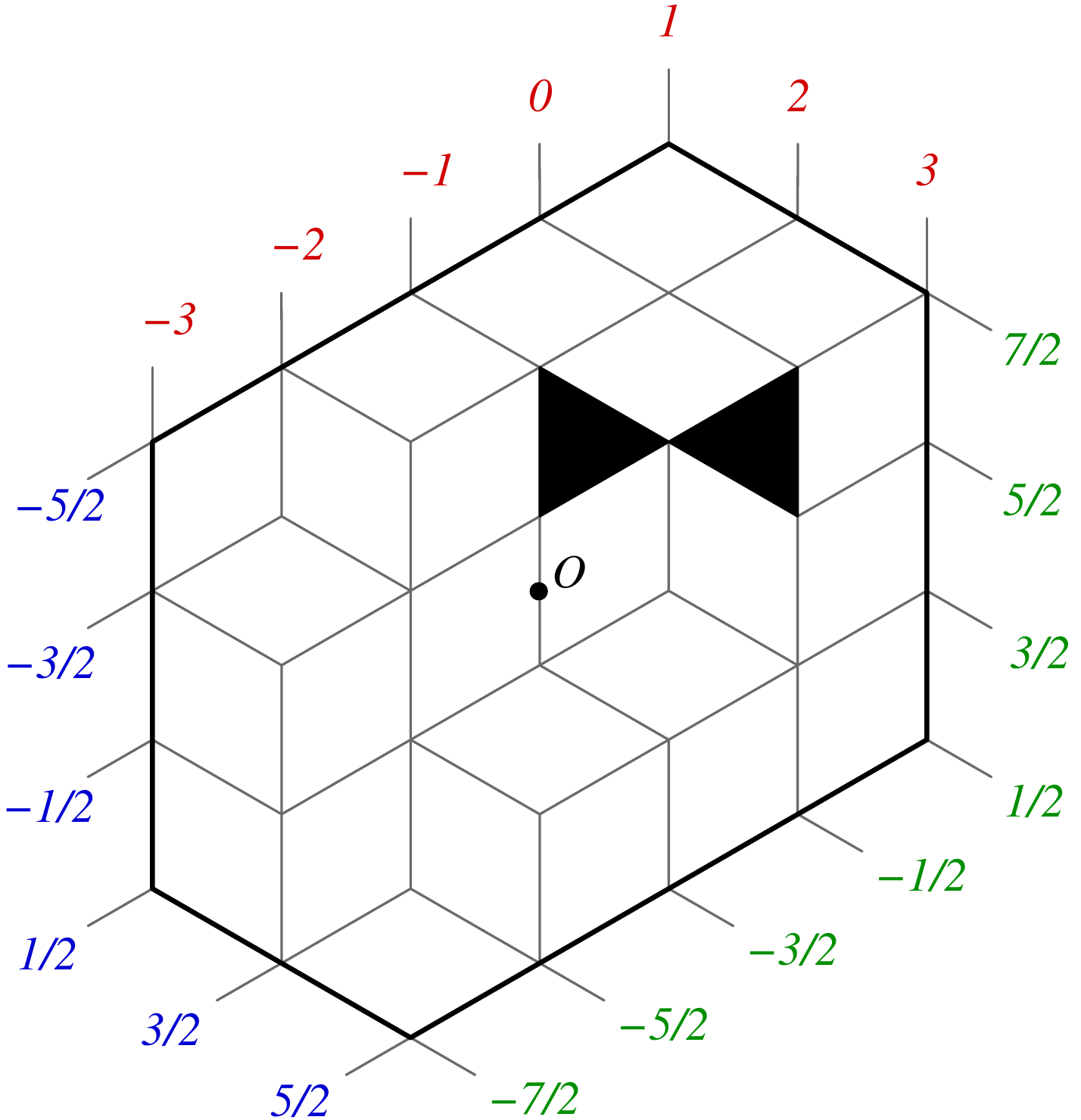}\caption{The hexagonal region $H_{2,3,4}\setminus\{(\tfrac{3}{2},-\tfrac{1}{2},0),(\tfrac{3}{2},-\tfrac{1}{2},2)\}$ (left) together with a tiling of the region obtained by joining together pairs of unit triangles contained within it that share an edge (right).}\label{fig:LineLab}
\end{figure}
\subsection{Translating rhombus tilings into families of non-intersecting paths}
We have already established that rhombus tilings of $H\setminus T$ give rise to families of non-intersecting paths across unit rhombi. According to our convention regarding start and end points the unit rhombi that are traversed by these paths are either \emph{horizontal} (by which we mean each one is formed by joining together the left pointing unit triangle $(l,l',l'')$ with $(l+1,l'+1,l'')$) or \emph{left leaning} (these are formed by joining together the left pointing unit triangle $(l,l',l'')$ with the right pointing $(l+1,l',l''-1)$)\footnote{The other type of rhombi contained in each tiling shall be referred to as \emph{right leaning} and are formed by joining a left pointing triangle $(l,l',l'')$ with a right pointing one $(l,l'+1,l''-1)$.}.

Let us identify the set of start points of our paths across rhombi by the left pointing unit triangles on whose south-west edges these start points lie, thus $S:=S_H\cup S_T$ where
$$S_H:=\{(-\tfrac{b+c}{2},\tfrac{b-a}{2}+i-1,i-\tfrac{a+c}{2})\in H\setminus T:1\leq i\leq a\}$$
denotes the set of triangles that lie along the south-west edge of $H\setminus T$ and $$S_T:=\{(l,l',l'')\in H\setminus (T\cup S_H):(l,l'+1,l''-1)\notin H\setminus T\}$$ those that lie along the north-east edge of any holes in its interior.

In a similar way we shall identify the end points $E$ with the right pointing unit triangles on whose north-east edges the end points lie, thus $E:=E_H\cup E_T$ where
$$E_H:=\{(\tfrac{b+c}{2},j-\tfrac{a+b}{2},\tfrac{c-a}{2}+j-1)\in H\setminus T:1\leq j\leq a\}$$
corresponds to those points in $E$ that lie along the north-east boundary of $H\setminus T$ and
$$E_T:= \{(l,l',l'')\in H\setminus (T\cup E_H):(l,l'-1,l''+1)\notin H\setminus T\}$$ are the unit triangles corresponding to the points in $E$ that lie along the south-west edge of any holes in its interior.
\begin{figure}
	\includegraphics[width=0.4 \textwidth]{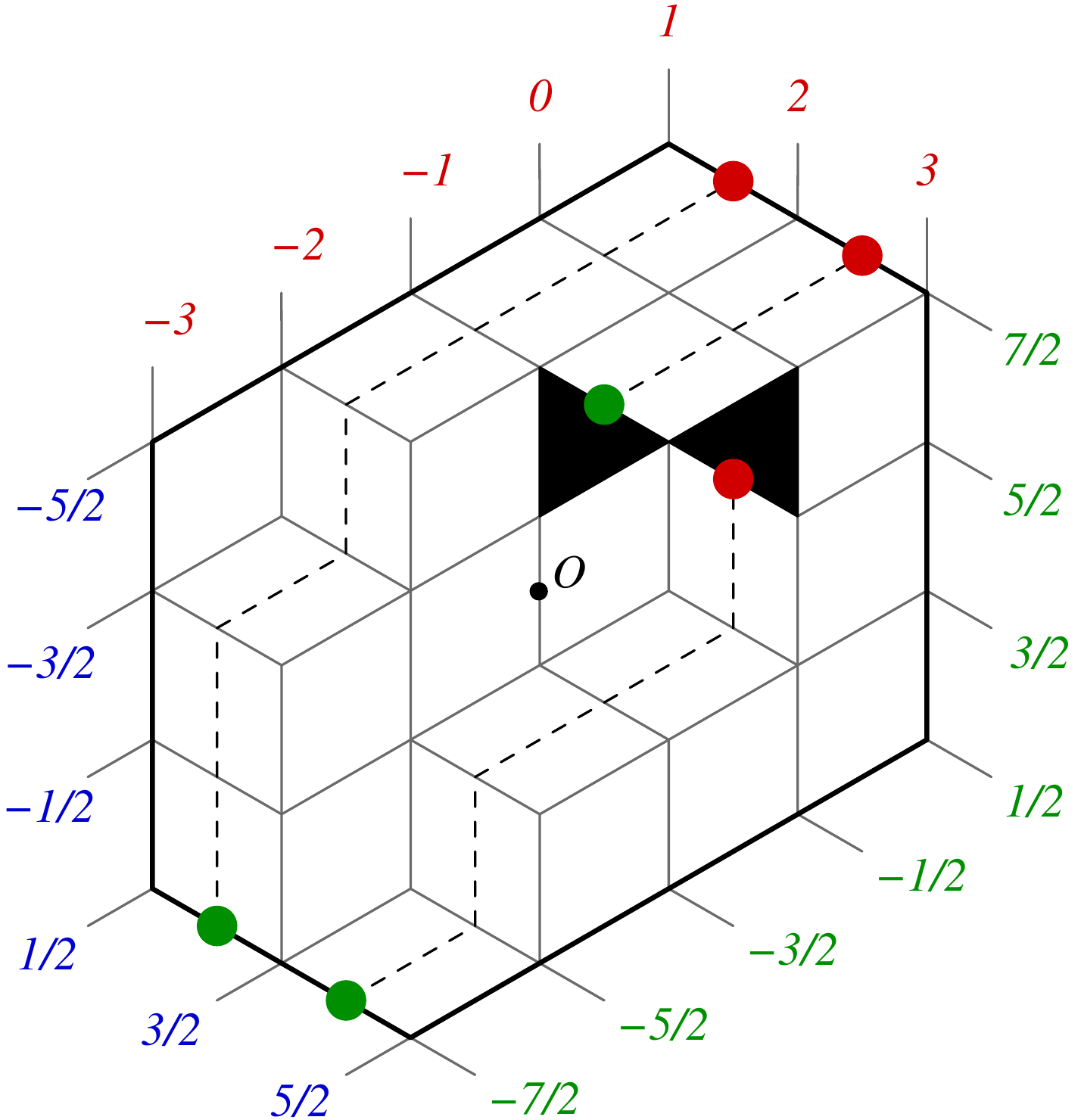}\,\,\,\,\,\,\,\,\,\,\,\,\,\,\,\,\,\,\,\,\includegraphics[width=0.4 \textwidth]{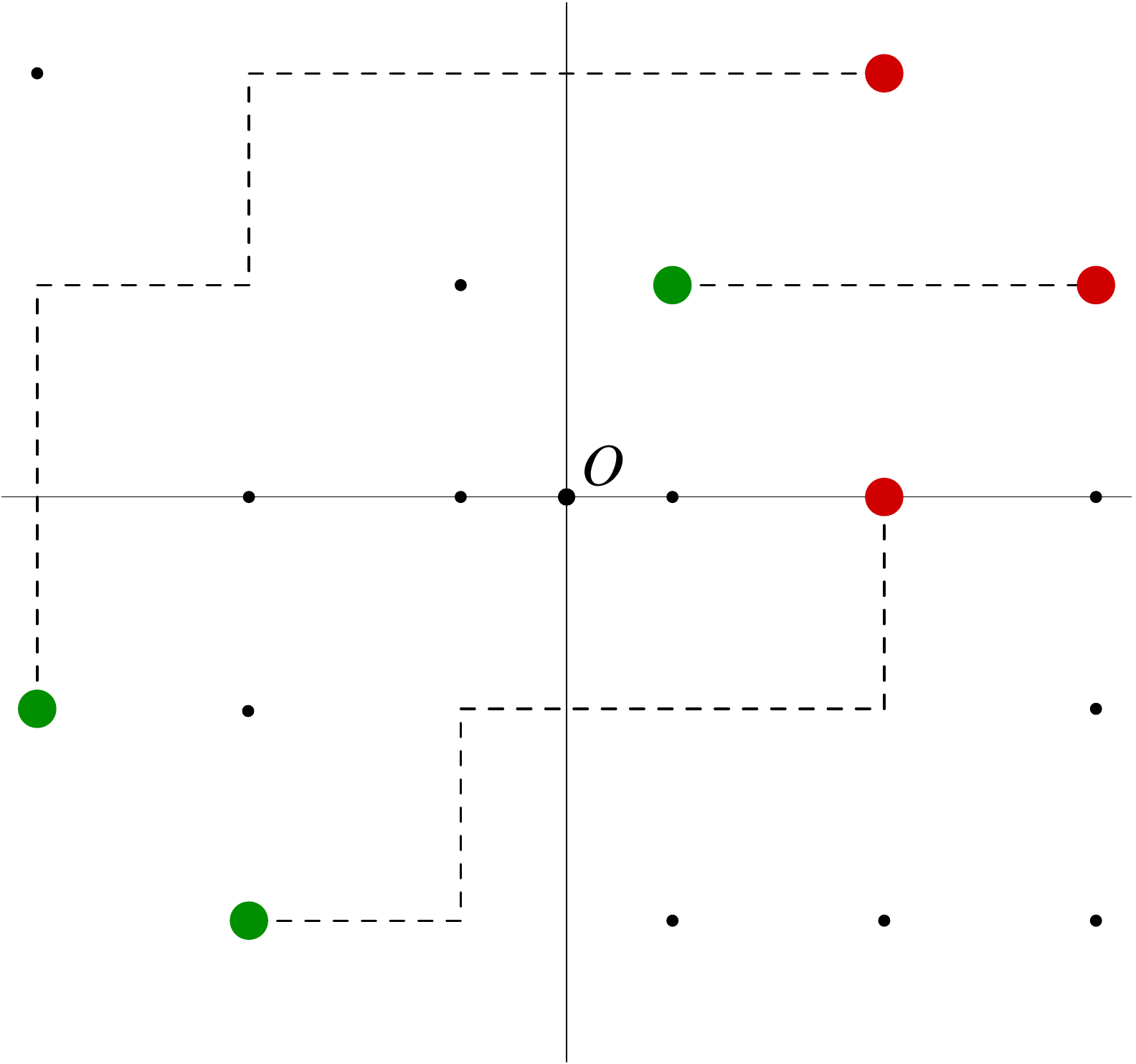}\caption{A family of non-intersecting lattice paths across unit rhombi (left), and the translation of these paths into non-intersecting lattice paths on $(\mathbb{Z}+\tfrac{1}{2})\times\mathbb{Z}$ consisting of north and east unit steps (right).}\label{fig:Paths}
\end{figure}
A path across rhombi from a point in $S$ to a point in $E$ consisting of $p$-many horizontal and $q$-many left leaning rhombi may then be written as a tuple $(R_1,\dots,R_{p+q})$ of pairs of unit triangles $R_i:=(\triangleleft_{i},\triangleright_{i})$ corresponding to either left leaning or horizontal rhombi, where $\triangleleft_1\in S$, $\triangleright_{p+q}\in E$, and the north-east side of $R_i$ coincides with the south-west side of $R_{i+1}$ for $1\le i\le p+q-1$ (that is, the first coordinate of $\triangleright_i$ agrees with that of $\triangleleft_{i+1}$).

Consider now the function $\psi:\mathbb{Z}_{b,c}\times\mathbb{Z}_{a,b}\times\mathbb{Z}_{a,c}\rightarrow\mathbb{Z}_{a,c}\times\mathbb{Z}_{a,b}$ given by
$$\psi((x,y,z)):=(\tfrac{1}{2}(x+y+z),\tfrac{1}{2}(x-y-z)).$$
For a horizontal rhombus $R_i$ we have
$$\psi:((l,l',l''),(l+1,l'+1,l''))\rightarrow((\tfrac{1}{2}(l+l'+l''),\tfrac{1}{2}(l-l'-l'')),(\tfrac{1}{2}(l+l'+l'')+1,\tfrac{1}{2}(l-l'-l''))),$$
thus $\psi$ maps horizontal rhombi to a pair of coordinates in $\mathbb{Z}_{a,c}\times\mathbb{Z}_{a,b}$ that describe an east unit step beginning at $\psi(\triangleleft_i)$ and ending at $\psi(\triangleright_i)$. In a similar way it can be shown that if $R_i$ is instead a left leaning rhombus then $\psi$ maps $R_i$ to a pair of coordinates that describe a north unit step\footnote{Similarly $\psi$ maps a right leaning rhombus to a single point.}. Furthermore, for $\triangleright_i\in R_i$ and $\triangleleft_{i+1}\in R_{i+1}$, we have $\psi(\triangleright_{i})=\psi(\triangleleft_{i+1})$, hence under $\psi$ a path across rhombi corresponds to a sequence of coordinates that encode a \emph{lattice path} on $\mathbb{Z}_{a,c}\times\mathbb{Z}_{a,b}$ that begins at $\psi(\triangleleft_1)=(x,y)$, ends at $\psi(\triangleright_{p+q})=(x+p,y+q)$, and consists of $p$-many east and $q$-many north unit steps.

Applying $\psi$ to every path across rhombi obtained from a tiling of $H\setminus T$ yields a family of lattice paths beginning at $S_\psi:=\{\psi(\triangleleft):\triangleleft\in S\}$ and ending at $E_{\psi}:=\{\psi(\triangleright):\triangleright\in E\}$. Since in any tiling of $H\setminus T$ no two paths across rhombi will traverse a common rhombus, it follows that no two lattice paths in this family will intersect at a common vertex in $\mathbb{Z}_{a,c}\times\mathbb{Z}_{a,b}$. The number of tilings of $H\setminus T$ is then the number of families of non-intersecting lattice paths that begin at $S_{\psi}$ and end at $E_{\psi}$, and from now on we shall use $S$ and $E$ to denote these sets of points (respectively). An example of a tiling together with the corresponding non-intersecting lattice paths may be found in Figure~\ref{fig:Paths}.

\subsection{The lattice path matrix}\label{subsec:LattPathMat}
Consider now the two tuples of start and end points, say $S:=(s_1,s_2,\dots,s_{|S|})$ and $E:=(e_1,e_2,\dots,e_{|E|})$. We know that tilings of $H\setminus T$ correspond to non-intersecting lattice paths that connect the points in $S$ to those in $E$, but it is certainly possible that two different tilings give rise to two families of paths in which the connectivity of the start and end points differs. For each $\sigma\in\mathfrak{S}_{|S|}$ let $E_{\sigma}:=(e_{\sigma(1)},e_{\sigma(2)},\dots,e_{\sigma(|E|)})$ and suppose $N(S,E_{\sigma})$ denotes the total number of families of non-intersecting lattice paths in which each point $s_i\in S$ is joined to $e_{\sigma(i)}\in E_{\sigma}$ (it may well be that $N(S,E_{\sigma})$ is zero for certain $\sigma$). 

Lindstr\"om~\cite{Lindstrom73} (and later, Gessel and Viennot~\cite{GesselViennot89}) showed that
\begin{equation}\label{eq:LGV}
\sum_{\sigma\in\mathfrak{S}_{|S|}}\sgn(\sigma)N(S,E_{\sigma})=\pm\det(P_{S,E}),
\end{equation}
where $P_{S,E}=(P_{i,j})_{1\le i,j\le |S|}$ is the \emph{lattice path matrix} corresponding to $S$ and $E$ with entries given by the number of non-intersecting lattice paths that begin at $s_i$ and end at $e_j$.

\begin{rmk}\label{rmk:Binom}The number of lattice paths beginning at $(x_1,y_1)$ and ending at $(x_2,y_2)$ is given by the binomial coefficient
$$\binom{x_2-x_1+y_2-y_1}{x_2-x_1},$$
where
$$\binom{n}{k}:=\begin{cases}\frac{n!}{(n-k)!k!}&n,k\in\mathbb{N}_0, k\leq n,\\ 0 &otherwise\end{cases}$$
(in the above $\mathbb{N}_0:=\mathbb{N}\cup\{0\}$).	At first sight this may seem like a somewhat unnatural definition of the binomial coefficient, however a moment's thought convinces us that this definition is in fact completely natural within this context. We are enumerating lattice paths consisting of unit steps in the north and east directions on $\mathbb{Z}_{a,c}\times\mathbb{Z}_{a,b}$ and thus it \emph{should} give zero for points that are not separated by unit steps, and also for those pairs of points for which the end point is located to the left of, or below, the start point. We therefore interpret all binomial coefficients within this article in the same way.
\end{rmk}

\section{Combining the two approaches}\label{sec:Comb}

We return now to our expression for the $(i,j)$-entry of the inverse Kasteleyn matrix $A_{G}^{-1}$ from the end of Section~\ref{sec:PerfMatch},
$$(-1)^{i+j}\cdot\frac{\det(A_{G\setminus\{b_j,w_i\}})}{\det(A_G)}.$$
The graph $G\setminus\{b_j,w_i\}\subset\mathscr{H}$ corresponds to a hexagon $H\setminus\{\triangleleft,\triangleright\}\subset\mathscr{T}$, where $\triangleleft$ is the left pointing unit triangle corresponding to $b_j$ and $\triangleright$ the right pointing triangle corresponding to $w_i$.

If we denote by $(r_x,r_y)$ and $(l_x,l_y)$ the start and end points generated by the removal of $w_i$ and $b_j$ respectively\footnote{It can easily be checked that these coordinates are given by $\psi(\triangleright)$ and $\psi(\triangleleft)$.} then according to Section~\ref{sec:RhombTil} each perfect matching of $G\setminus\{b_j,w_i\}$ corresponds to a certain family of non-intersecting lattice paths that begin at the set of points $S^{w_i}:=\{(i-\tfrac{1+a+c}{2},\tfrac{a-b+1}{2}-i):1\le i\le a\}\cup\{(r_x,r_y)$ and end at the set of points $E^{b_j}:=\{(j-\tfrac{1+a-c}{2},\tfrac{a+b+1}{2}-j):1\le j\le a\}\cup\{(l_x,l_y)\}$.  Suppose we order our start points $S^{w_i}:=(s_1,s_2,\dots,s_{a+1})$ so that
$$s_i:=\begin{cases}
(i-\tfrac{1+a+c}{2},\tfrac{a-b+1}{2}-i) & 1\le i\le a,\\
(r_x,r_y) & i=a+1,
\end{cases}$$
and at the same time order our end points $E^{b_j}:=(e_1,e_2,\dots,e_{a+1})$, so that
$$e_j:=\begin{cases}
(j-\tfrac{1+a-c}{2},\tfrac{a+b+1}{2}-j) & 1\le j\le a,\\
(l_x,l_y) & j=a+1.
\end{cases}$$

We may then construct the lattice path matrix $P_{S,E}^{w_i,b_j}=(P_{i,j})_{1\le i,j\le a+1}$ to be the $(a+1)\times(a+1)$ matrix with $(i,j)$-entry given by the number of paths from $s_i\in S^{w_i}$ to $e_j\in E^{b_j}$,  that is,
$$P_{i,j}:=\begin{cases}
				\binom{b+c}{c+j-i}&1\le i,j\le a,\\
				\binom{(b+c)/2-r_x-r_y}{j-r_x-(a-c+1)/2}& i=a+1,1\le j\le a\\
				\binom{l_x+l_y+(b+c)/2}{l_x-i+(a+c+1)/2} & 1\le i\le a, j=a+1,\\
				\binom{l_x+l_y-r_x-r_y}{l_x-r_x} & i=j=a+1,	
			\end{cases}$$
It follows from~\cite{Lindstrom73} and~\cite{GesselViennot89} that $\det(P_{S,E}^{w_i,b_j})$ gives (up to sign) the following sum over non-intersecting paths
\begin{equation}\label{eq:SgnSumP}
\sum_{\sigma\in\mathfrak{S}_{a+1}}\sgn(\sigma) N(S^{w_i},E^{b_j}_{\sigma}),
\end{equation}
where $N(S^{w_i},E^{b_j}_{\sigma})$ is the number of non-intersecting lattice paths that begin at $S^{w_i}$ and end at $E_{\sigma}^{b_j}:=(e_{\sigma(1)},e_{\sigma(2)},\dots,e_{\sigma(a+1)})$.

If we look a little closer we see that what we have in this expression is a sum over different families of non-intersecting lattice paths, some of which contribute negatively and some of which contribute positively. If $P^+$ denotes the set of all families that make a positive contribution while $P^-$ denotes those families that make a negative one then we have
$$\pm\det(P_{S,E}^{w_i,b_j})=|P^+|-|P^-|.$$

Consider now $\det(A_{G\setminus\{b_j,w_i\}})$, which may be written as 
\begin{equation}\label{eq:SgnSumM}
\sum_{\pi\in\mathfrak{S}_{ab+bc+ca-1}}\sgn(\pi)P_m(B',W_{\pi}'),
\end{equation}
in which $P_m(B',W_{\pi}')$ denotes the number of perfect matchings where the $i$-th vertex in $B'$ (here $B':= (b_1,b_2,\dots,b_{ab+bc+ca-1})$ is the tuple of labelled black vertices of $G\setminus\{b_j,w_i\}$) is matched with the $i$-th vertex in $W_{\pi}'$ ($W_{\pi}':=(w_{\pi(1)},w_{\pi(2)},\dots,w_{\pi(ab+bc+ca-1)})$ is the tuple of labelled white vertices of $G\setminus\{b_j,w_i\}$). 

As with our expression for $\det(P_{S,E}^{w_i,b_j})$ above we may write~\eqref{eq:SgnSumM} as a sum over sets of perfect matchings, some of which contribute positively and some of which contribute negatively to the sum. By letting $M^+$ denote the set of all matchings that make a positive contribution and $M^-$ the set of all those that make a negative one, we see that
$$\pm\det(A_{G\setminus\{b_j,w_i\}})=|M^+|-|M^-|.$$

How, then, may we relate the sets of families of lattice paths $P^+$ and $P^-$ to the set of perfect matchings $M^-$ and $M^+$? We already know that the union $P^+\cup P^-$ is in bijection with $M^+\cup M^-$, however in 2015 Cook and Nagel~\cite{CookNagel15} refined this bijection even further, successfully showing that the families of paths in $P^+$ are in bijection with either those matchings in $M^+$, or instead with those in $M^-$, thus
\begin{equation}
\label{eq:AeqP}\det(A_{G\setminus\{b_j,w_i\}})=\pm\det(P_{S,E}^{w_i,b_j})\end{equation} and our goal now is to find a closed expression for $\det(P_{S,E}^{w_i,b_j})$.
\begin{rmk}
	Cook and Nagel in fact refined the bijection between signed lattice paths and signed perfect matchings for more general triangular regions of the triangular lattice, however it is easy to see that the hexagonal region $H$ may be obtained by cutting off corners from a larger triangular region.
\end{rmk}
\begin{rmk}\label{rmk:COnventionM}
	It should be noted that the sign in~\eqref{eq:AeqP} can be controlled by labelling the vertices of $G\setminus\{b_j,w_i\}$ in a consistent manner. Consider the tuples of vertices from $G$, $B:=(b_1,b_2,\dots,b_{ab+bc+ca})$ and $W:=(w_1,w_2,\dots,w_{ab+bc+ca})$. Let us now remove a vertex $b_j$ from $B$ and $w_i$ from $W$ and let $B':=B\setminus\{b_j\},W':=W\setminus\{w_i\}$. We then re-label each element $b'_k\in B'$ according to the following convention
	$$b'_k:=\begin{cases}b_k & 1\le k< j,\\
	b_{k-1} & k > j,\end{cases}
	$$
	and similarly for those vertices in $W'$ . Since we have also fixed the labelling of our start and end points that index our lattice path matrix $P_{S,E}^{w_i,b_j}$, it follows that either 
	$$\det(A_{G\setminus\{b_j,w_i\}})=-\det(P_{S,E}^{w_i,b_j})$$
	for \emph{every} pair of vertices $\{b_j,w_i:b_j\in B, w_i\in W\}$, otherwise
	$$\det(A_{G\setminus\{b_j,w_i\}})=\det(P_{S,E}^{w_i,b_j})$$
	for all such pairs.
\end{rmk}
\section{An exact formula}\label{sec:Exact}
We shall now derive a closed form expression for $\det(P_{S,E}^{w_i,b_j})$ by finding the $LU$-decomposition of our lattice path matrix. The following result was guessed using the computer software package {\tt Rate}\footnote{This Mathematica package was created by C.~Krattenthaler and is available at~\url{http://www.mat.univie.ac.at/~kratt/rate/rate.html}.} (``Guess" in German) and its proof relies partly on a computer implementation of Zeilberger's algorithm\footnote{A Mathematica implementation of this algorithm is available at~\url{http://www.risc.jku.at/research/combinat/software}.} (see~\cite{PauleSchorn95}).
\begin{prop}
	The lattice path matrix $P_{S,E}^{w_i,b_j}$ has $LU$-decomposition
	$$P_{S,E}^{w_i,b_j}=L\cdot U$$
	where $L=(L_{i,j})_{1\le i,j\le a+1}$ has entries given by
	$$L_{i,j}:=\begin{cases}A(b,c,i,j)& 1\le i,j\le a,\\
	 B(a,b,c,r_x,r_y,j)& i=a+1,1\le j\le a,\\
	1& otherwise,
	\end{cases}$$
	and $U=(U_{i,j})_{1\le i,j\le a+1}$ is given by
	$$U_{i,j}:=\begin{cases}
	C(b,c,i,j)& 1\le i,j\le a,\\
	D(a,b,c,l_x,l_y,i) & 1\le i\le a, j=a+1,\\
	\binom{l_x+l_y-r_x-r_y}{l_x-r_x}-\sum_{v=1}^aB(v)\cdot D(v)&i=j=a+1,\\
	0& \text{otherwise,}
	\end{cases}$$
	with
	\begin{align*}
	A(b,c,i,j):=&\frac{c! (i-1)! (b+j-1)!}{(j-1)! (b+i-1)! (i-j)! (c-i+j)!},\\
	B(a,b,c,r_x,r_y,j):=&\sum_{v=1}^{j}\frac{(-1)^{j-v} (b+j-1)! (c+v-1)! (b+j-v-1)!}{(b-1)! (v-1)! (j-v)! (b+c+j-1)!}\binom{\frac{b}{2}+\frac{c}{2}-r_x-r_y}{v-\frac{1}{2} (a-c+1)-r_x},\\
	C(b,c,i,j):=&\frac{b! (j-1)! (b+c+i-1)!}{(b+i-1)! (c+j-1)! (j-i)! (b+i-j)!},\\
	D(a,b,c,l_x,l_y,i):=&\sum_{v=1}^{i}\frac{(i-1)! (-1)^{i-v} (b+v-1)! (c+i-v-1)!}{(c-1)! (v-1)! (b+i-1)! (i-v)!}\binom{\frac{b}{2}+\frac{c}{2}+l_x+l_y}{\frac{1}{2} (a+c+1)+l_x-v}.
	\end{align*}	
\end{prop}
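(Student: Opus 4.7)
The claim reduces to verifying $P_{i,j}=(LU)_{i,j}$ for each $(i,j)\in\{1,\dots,a+1\}^{2}$. Because $A(b,c,i,k)$ vanishes for $k>i$ (from the factor $1/(i-k)!$) and $C(b,c,k,j)$ vanishes for $k>j$, the top-left $a\times a$ blocks of $L$ and $U$ are respectively lower- and upper-triangular, so for $1\le i,j\le a$ the matrix product collapses to $\sum_{k=1}^{\min(i,j)}A(b,c,i,k)\,C(b,c,k,j)$. The $(a+1,a+1)$-entry of $U$ is defined \emph{exactly} as the Schur complement $P_{a+1,a+1}-\sum_{v}B(v)D(v)$, so the case $i=j=a+1$ is tautological; three substantive identities remain.

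For the puncture-free block ($1\le i,j\le a$) one must prove
\[
\binom{b+c}{c+j-i}=\sum_{k=1}^{\min(i,j)}A(b,c,i,k)\,C(b,c,k,j).
\]
Substituting the explicit formulas and cancelling common factors, the summand becomes a proper hypergeometric term in $k$; after an index shift it is a Pfaff--Saalsch\"utz-balanced ${}_3F_2$ at unit argument, whose closed-form evaluation is precisely the left-hand side. This can equivalently be verified by running Zeilberger's creative-telescoping algorithm on the summand to produce a first-order recurrence in $j$ (say) satisfied by both sides, and checking the single-term initial value $j=1$. This is the step that the author explicitly delegates to a computer implementation.

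For the two mixed blocks one expands $B$ (respectively $D$) by its inner $v$-sum and swaps the order of summation. The resulting inner $k$-sum is the \emph{same} balanced ${}_3F_2$ handled above, so it evaluates in closed form; the remaining outer $v$-sum collapses to a single term and returns the desired binomial. Concretely, for $i=a+1,\,1\le j\le a$ the calculation yields
\[
\sum_{k=1}^{j}B(a,b,c,r_x,r_y,k)\,C(b,c,k,j)=\binom{\tfrac{b+c}{2}-r_x-r_y}{j-r_x-\tfrac{a-c+1}{2}}=P_{a+1,j},
\]
and the case $1\le i\le a,\,j=a+1$ is handled symmetrically, with $A,D$ and $(l_x,l_y)$ replacing $C,B$ and $(r_x,r_y)$.

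The main technical obstacle is the puncture-free identity: tracking the half-integer shifts arising from $\tfrac{b+c}{2}$, $\tfrac{a-c+1}{2}$, $\tfrac{a+c+1}{2}$ against the ordinary factorials in $A$ and $C$, and aligning the summation limits so that the hypergeometric reduction applies uniformly, is the most fragile part of the argument. Once that identity is established (the Zeilberger step), the two mixed cases reduce to it by sum-swapping, the corner entry is automatic, and the proposition follows. Taking the product of the diagonal entries of $U$ subsequently yields a closed-form expression for $\det(P_{S,E}^{w_i,b_j})$, which feeds into the main theorem via~\eqref{eq:AeqP}.
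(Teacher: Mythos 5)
Your overall structure matches the paper's proof: the corner entry is the Schur complement by definition, the triangularity of $A$ and $C$ reduces everything to the three identities for the $A\cdot C$, $A\cdot D$ and $B\cdot C$ blocks, the puncture-free identity is a terminating hypergeometric summation (the paper verifies it via Zeilberger-produced first-order recurrences in $i$ and $j$ plus initial conditions, which is equivalent to your Pfaff--Saalsch\"utz evaluation of the balanced ${}_3F_2$), and the mixed identities are handled by interchanging the order of summation. The one point where your argument goes wrong is the claim that, after the swap, the inner sum in the mixed cases is ``the same balanced ${}_3F_2$ handled above.'' It is not: when you multiply $A(b,c,i,s)$ against the $s$-dependent part of the summand of $D$, the factors $(s-1)!$ and $(b+s-1)!$ cancel and the binomial coefficient (which depends only on the inner index $v$) factors out of the $s$-sum, leaving a terminating ${}_2F_1\bigl[c,\,v-i+1;\,c-i+v+2;\,1\bigr]$ rather than a ${}_3F_2$. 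The paper evaluates this by Chu--Vandermonde, obtaining $(v-i+2)_{i-v-1}/(c-i+v+2)_{i-v-1}$, and it is exactly the vanishing of this quotient for $v<i-1$ that makes the outer $v$-sum collapse to a single term --- a mechanism you assert but do not justify. The repair is routine (Chu--Vandermonde in place of Pfaff--Saalsch\"utz, and the explicit vanishing argument for the collapse), so the proposition still follows, but as written the key step of the mixed cases rests on a misidentification of the inner sum.
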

\begin{proof}
	It is easy to see that $$\sum_{s=1}^{a+1}L_{a+1,s}\cdot U_{s,a+1}=\binom{l_x+l_y-r_x-r_y}{l_x-r_x},$$
	thus in order to complete the proof we must show the following:
	\begin{enumerate}[(i)]
		\item $\sum_{s=1}^{\min\{i,j\}}A(b,c,i,s)\cdot C(b,c,s,j)=\binom{b+c}{c+j-i}$;
		\item $\sum_{s=1}^{i}A(b,c,i,s)\cdot D(a,b,c,l_x,l_y,s)=\binom{l_x+l_y+(b+c)/2}{l_x-i+(a+c+1)/2}$;
		\item $\sum_{s=1}^{j}B(a,b,c,r_x,r_y,s)\cdot C(b,c,s,j)=\binom{(b+c)/2-r_x-r_y}{j-r_x-(a-c+1)/2}$.
	\end{enumerate}
	
	In the first case we have
	\begin{multline*}(b+i-j+1)\sum_{s=1}^{i+1}A(b,c,i+1,s)\cdot C(b,c,s,j)\\+(i-j-c)\sum_{s=1}^{i}A(b,c,i,s)\cdot C(b,c,s,j)=0,\end{multline*}
	and
	\begin{multline*}(c-i+j+1)\sum_{s=1}^{j+1}A(b,c,i,s)\cdot C(b,c,s,j+1)\\+(j-i-b)\sum_{s=1}^{j}A(b,c,i,s)\cdot C(b,c,s,j)=0.\end{multline*}
	It is straightforward to check that
	$$(b+i-j+1)\binom{b+c}{c+j-i-1}+(i-j-c)\binom{b+c}{b+j-i}=0$$
	and
	$$(c-i+j+1)\binom{b+c}{c+j-i+1}+(j-i-b)\binom{b+c}{c+j-i}=0,$$
	so the identity holds once the initial conditions for the recurrence have been verified.
	
	For the second case note that by interchanging the summations we obtain
	$$\sum_{v=0}^{i-1}\sum_{s=v}^{i-1}\frac{c(-1)^{s-v} (i-1)! (b+v)! (c+s-v-1)!}{ v! (b+i-1)! (i-s-1)! (s-v)! (c-i+s+1)! }\binom{\frac{b}{2}+\frac{c}{2}+l_x+l_y}{\frac{a}{2}+\frac{c}{2}+\frac{1}{2}+l_x-v}$$
	the inner sum of which may in turn be expressed as a $_2F_1$ hypergeometric series\footnote{The $_pF_q$ hypergeometric series, denoted $\displaystyle\pFq{p}{q}{a_1,\dots,a_p}{b_1,\dots,b_q}{z}$, is defined to be $\displaystyle\sum_{k=0}^{\infty}\frac{(a_1)_k\cdots(a_p)_k}{(b_1)_k\cdots(b_q)_k}\frac{z^k}{k!}$, where $(\alpha)_{\beta}$ is the Pochhammer symbol, that is, $(\alpha)_{\beta}:=\alpha\cdot(\alpha+1)\cdots(\alpha+\beta-1)$ for $\beta> 0$, while $(\alpha)_0:=1$. \label{fn:Poch}}
	$$\sum_{v=0}^{i-1}\frac{c!(i-1)!(b+v)!}{v!(b+i-1)!(i-v-1)!(c-i+v+1)!}\binom{\frac{b}{2}+\frac{c}{2}+l_x+l_y}{\frac{a}{2}+\frac{c}{2}+\frac{1}{2}+l_x-v}\pFq{2}{1}{c,v-i+1}{c-i+v+2}{1}.$$
	
	When faced with an expression such as this there is a dearth of transformation and summation identities that one may turn to in order to try to simplify things. In the expression above it turns out that a straightforward application of the Chu-Vandermonde identity, 
	$$\pFq{2}{1}{a,-n}{c}{1}=\frac{(c-a)_n}{(c)_n},$$
	(which may be found in~\cite[1.7.7; Appendix III.4]{Slat66}) yields
	$$\frac{(v-i+2)_{i-v-1}}{(c-i+v+2)_{i-v-1}},$$
	where $(\alpha)_{\beta}$ is the Pochhammer symbol (see footnote~\ref{fn:Poch}). For $v<i-1$ the above term vanishes, thus proving (ii). 
	
	Precisely the same approach can be used to prove the third case (that is, interchanging the sums and applying the Chu-Vandermonde identity), thus it suffices to say that once this last identity has been verified the proof is complete.
\end{proof}
This proposition immediately gives rise to the following Corollary.

\begin{cor}
	The determinant of the lattice path matrix $P_{S,E}^{w_i,b_j}$ is given by
	$$M(H)\cdot\left(\binom{l_x+l_y-r_x-r_y}{l_x-r_x}-\sum_{v=1}^aB(v)\cdot D(v)\right).$$
\end{cor}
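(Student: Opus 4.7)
The proof is essentially immediate from the Proposition once one recognises a well-known rewriting of MacMahon's formula. The plan is to read off $\det(P_{S,E}^{w_i,b_j})$ from the $LU$-decomposition and then simplify the product of the diagonal entries.

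First I would observe that $L$ and $U$ are lower and upper triangular respectively, so
\[
\det(P_{S,E}^{w_i,b_j}) \;=\; \det(L)\cdot\det(U) \;=\; \prod_{i=1}^{a+1}L_{i,i}\cdot\prod_{i=1}^{a+1}U_{i,i}.
\]
Substituting the definitions from the Proposition, $L_{a+1,a+1}=1$, and for $1\le i\le a$ one has $L_{i,i}=A(b,c,i,i)=1$ (all the factors outside the diagonal in $A(b,c,i,j)$ specialise trivially when $i=j$). Hence $\det(L)=1$. On the upper-triangular side, the last diagonal entry of $U$ is precisely the bracket
\[
\binom{l_x+l_y-r_x-r_y}{l_x-r_x}-\sum_{v=1}^{a}B(v)\cdot D(v),
\]
while for $1\le i\le a$ the diagonal entry $U_{i,i}=C(b,c,i,i)$ simplifies to
\[
C(b,c,i,i)=\frac{(i-1)!\,(b+c+i-1)!}{(b+i-1)!\,(c+i-1)!}.
\]

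The only remaining step is to recognise
\[
\prod_{i=1}^{a}\frac{(i-1)!\,(b+c+i-1)!}{(b+i-1)!\,(c+i-1)!}
\;=\;
\prod_{i=1}^{a}\prod_{j=1}^{b}\prod_{k=1}^{c}\frac{i+j+k-1}{i+j+k-2}
\;=\;M(H),
\]
which is the standard factorial rearrangement of MacMahon's box formula (quoted in Section~\ref{sec:PerfMatch}). This may be verified, for example, by telescoping the triple product over $k$ to obtain $(b+i)_c/(b+i-1)_c$ or, equivalently, by induction on $a$ using the identity $(b+c+a)!/((b+a)!(c+a)!) \cdot a! = \binom{b+c+a}{b+a}\cdot\text{(lower-order terms)}$; either way the computation is entirely routine.

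Combining these pieces gives
\[
\det(P_{S,E}^{w_i,b_j}) \;=\; M(H)\cdot\left(\binom{l_x+l_y-r_x-r_y}{l_x-r_x}-\sum_{v=1}^{a}B(v)\cdot D(v)\right),
\]
as claimed. The only conceptual content beyond the Proposition itself is the MacMahon rewriting, which is classical; there is no genuine obstacle, only bookkeeping. If a difficulty arises it will be purely notational, in keeping track of the factorial cancellations in $A(b,c,i,i)$ and $C(b,c,i,i)$ versus the triple-product form of MacMahon's identity.
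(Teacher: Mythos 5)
Your proposal is correct and follows essentially the same route as the paper: both arguments use the triangularity of $L$ and $U$, note that $L$ has unit diagonal (indeed $A(b,c,i,i)=1$), take the product of the diagonal entries of $U$, and identify $\prod_{i=1}^{a}C(b,c,i,i)$ with MacMahon's triple-product formula for $M(H)$. The only difference is that you spell out the factorial form of $C(b,c,i,i)$ and sketch the classical rewriting, which the paper simply asserts.
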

This follows from the fact that $L_{i,i}=1$ for $1\le i\le a+1$, thus the determinant of $P_{S,E}^{w_i,b_j}$ is the product of the diagonal entries of $U$,
$$\left(\prod_{i=1}^{a}A(b,c,i,i)\cdot C(b,c,i,i)\right)\cdot U_{a+1,a+1}.$$
The product on the left of this expression may be re-written as
$$\prod_{i=1}^a\prod_{j=1}^b\prod_{k=1}^c\frac{i+j+k-1}{i+j+k-2}$$which we instantly recognise as MacMahon's formula~\cite{MacMahon16} that counts the number of tilings of the hexagon $H$ (see Section~\ref{sec:PerfMatch}).

Everything is now in place for us to state the main result of this article, which follows from inserting our expression for $\det(P_{S,E}^{w_i,b_j})$ into our expression for the entries of $A_G^{-1}$ from Section~\ref{sec:PerfMatch}.

\begin{thm}\label{thm:Main}
	The inverse Kasteleyn matrix corresponding to the sub-graph $G$ of the hexagonal lattice $\mathscr{H}$ consisting of black and white vertices ($\{b_1,b_2,\dots,b_{ab+bc+ca}\}$ and $\{w_1,w_2,\dots,w_{ab+bc+ca}\}$ respectively) is equal to $(\pm1)\cdot K$, where $K=(K_{w_i,b_j})_{w_i,b_j\in G}$ is the matrix with entries given by
	$$(-1)^{i+j}\cdot\left( \binom{j_x+j_y-i_x-i_y}{j_x-i_x}-\sum_{t=1}^{a}\displaystyle\frac{g(a,b,c,j_x,j_y,t)g(a,c,b,-i_y,-i_x,t)}{\binom{b+c+t-1}{b+t-1}\binom{b+t-1}{t-1}}\right),$$
	in which
	$$g(u,v,w,x,y,z):=\displaystyle\sum_{s=1}^z(-1)^{z-s}\binom{v+s-1}{s-1}\binom{w+z-s-1}{w-1}\binom{\frac{v}{2}+\frac{w}{2}+x+y}{x-s+\frac{u}{2}+\frac{w}{2}+\frac{1}{2}},$$
	and the points $(i_x,i_y),(j_x,j_y)\in\mathbb{Z}_{a,c}\times\mathbb{Z}_{a,b}$ are determined by the distance of the vertices $w_i$ and $b_j$ (respectively) from the centre of $G$.
\end{thm}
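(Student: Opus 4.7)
The plan is to assemble the pieces already developed in Sections~\ref{sec:PerfMatch}--\ref{sec:Exact} and then to verify by direct algebraic manipulation that the result agrees with the formula as stated.

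First I would start from the expression given at the end of Section~\ref{sec:PerfMatch}, namely that by Cramer's rule the $(w_i,b_j)$-entry of $A_G^{-1}$ is
\[
(-1)^{i+j}\cdot\frac{\det(A_{G\setminus\{b_j,w_i\}})}{\det(A_G)}.
\]
Next, by equation~\eqref{eq:AeqP} (Cook--Nagel), the numerator equals $\pm\det(P_{S,E}^{w_i,b_j})$, and by the Corollary
\[
\det(P_{S,E}^{w_i,b_j}) = M(H)\cdot\Bigl(\tbinom{l_x+l_y-r_x-r_y}{l_x-r_x}-\sum_{v=1}^{a}B(a,b,c,r_x,r_y,v)\,D(a,b,c,l_x,l_y,v)\Bigr).
\]
Since $\det(A_G)=M(H)$ by MacMahon's formula, the MacMahon factor cancels and I am left, up to an overall sign governed by the labelling convention of Remark~\ref{rmk:COnventionM}, with
\[
(-1)^{i+j}\Bigl(\tbinom{l_x+l_y-r_x-r_y}{l_x-r_x}-\sum_{v=1}^{a}B(v)\,D(v)\Bigr).
\]
Under the identifications $(r_x,r_y)=(i_x,i_y)=\psi(\triangleright_i)$ and $(l_x,l_y)=(j_x,j_y)=\psi(\triangleleft_j)$ explained in Section~\ref{sec:Comb}, the first binomial already matches the first term of $K_{w_i,b_j}$.

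The remaining task is purely bookkeeping: rewrite $B(a,b,c,i_x,i_y,t)\,D(a,b,c,j_x,j_y,t)$ as
\[
\frac{g(a,c,b,-i_y,-i_x,t)\,g(a,b,c,j_x,j_y,t)}{\binom{b+c+t-1}{b+t-1}\binom{b+t-1}{t-1}}.
\]
For the $D$-factor I would match summation indices $s=v$; comparing the factorial coefficients gives $g(a,b,c,j_x,j_y,t)=\binom{b+t-1}{t-1}D(a,b,c,j_x,j_y,t)$, an elementary factorial identity. For the $B$-factor the binomial argument $v-\tfrac{a-c+1}{2}-i_x$ does not literally match the argument $-i_y-s+\tfrac{a+b+1}{2}$ appearing in $g(a,c,b,-i_y,-i_x,t)$, so I would use the symmetry $\binom{n}{k}=\binom{n}{n-k}$ applied to $\binom{\frac{b}{2}+\frac{c}{2}-i_x-i_y}{\,\cdot\,}$ to equate them, then compare factorial coefficients to obtain $g(a,c,b,-i_y,-i_x,t)=\binom{b+c+t-1}{b+t-1}B(a,b,c,i_x,i_y,t)$. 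Substituting both identities yields the statement of the theorem.

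The only substantive obstacle is already absorbed by the Proposition, whose proof rests on an $LU$-guess verified via the Chu--Vandermonde identity and Zeilberger's algorithm; granting the Proposition, the proof of Theorem~\ref{thm:Main} is essentially a two-line combination of Cramer's rule, equation~\eqref{eq:AeqP}, the Corollary, and MacMahon's formula, with the cosmetic step of converting $B,D$ into $g$ via the binomial reflection and factorial re-grouping just described. I would therefore write the proof as a short paragraph executing these substitutions, being careful only to track the global sign ambiguity coming from Remark~\ref{rmk:COnventionM}, which accounts for the $(\pm 1)\cdot K$ in the conclusion.
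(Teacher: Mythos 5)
Your proposal is correct and follows exactly the paper's own route: the theorem is obtained by inserting the Corollary's evaluation of $\det(P_{S,E}^{w_i,b_j})$ into the Cramer's-rule expression for the entries of $A_G^{-1}$, cancelling MacMahon's formula, and tracking the sign via Remark~\ref{rmk:COnventionM}. Your explicit verification that $g(a,b,c,j_x,j_y,t)=\binom{b+t-1}{t-1}D(t)$ and $g(a,c,b,-i_y,-i_x,t)=\binom{b+c+t-1}{b+t-1}B(t)$ (the latter via the reflection $\binom{n}{k}=\binom{n}{n-k}$) is accurate and in fact supplies bookkeeping the paper leaves implicit.
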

\begin{rmk}
	The vertices $(i_x,i_y)$ and $(j_x,j_y)$ in the above theorem are obtained by applying the function $\psi$ from Section~\ref{sec:RhombTil} to the triples that describe the unit triangles corresponding to $w_i,b_j$, according to the labelling outlined in Section~\ref{sec:RhombTil}.
\end{rmk}
\section{Applications of the main result}\label{sec:App}

Theorem~\ref{thm:Main} has a number of useful applications as it allows us to compute the number of tilings of $H\setminus T$ as the determinant of a matrix whose size is dependent on $T$. By considering particular families of holes not only can we recover existing results, but at the same time we are afforded an entirely new position from which we may attack various problems that lie at the boundary of combinatorics and statistical physics.

\subsection{Exact enumeration of tilings}
Suppose $\{b_j,w_i\}$ is an admissibility inducing set of vertices contained in $G$. By translating $G\setminus \{b_j,w_i\}$ into an hexagonal region on $\mathscr{T}$ we see that $\{b_j,w_i\}$ correspond to either a pair of unit triangles that share an edge (and so form a rhombus) or meet at a point (forming a unit triangular ``bow tie"). Otherwise $b_i,w_j$ are a pair of vertices induce a larger set of holes that have even charge. 

According to Lemma~\ref{lem:AdmissPres} the number of such tilings is given by $$\prod_{i=1}^a\prod_{j=1}^b\prod_{k=1}^c\frac{i+j+k-1}{i+j+k-2}\cdot|K_{w_i,b_j}|.$$ 

\begin{rmk}\label{rmk:Fisch}If we specify $b_j,w_i$ so that the corresponding triangles form a horizontal rhombus in $H\setminus T$ then we recover a result of Fischer found in~\cite{Fischer01}, whereas for $b_j$ and $w_i$ forming a bow tie we obtain a generalisation of Eisenk\"olbl's result~\cite{Eisen99a}.\end{rmk}

This idea may be extended by way of Kenyon's result~\cite{Kenyon97} (see Section~\ref{sec:PerfMatch}) so that if $V:=\{b_1,b_2,\dots,b_k,w_1,w_2,\dots,w_k\}$ corresponds to a specific arrangement of rhombi or bow ties in $H\setminus T$ then
$$M(H\setminus T)=\prod_{i=1}^a\prod_{j=1}^b\prod_{k=1}^c\frac{i+j+k-1}{i+j+k-2}\cdot |\det(K_V)|,$$
where $K_V=(K_{w_i,b_j})_{w_i,b_j\in V}$ is the sub-matrix obtained by restricting $K$ to those rows and columns indexed by the vertices in $V$.

If $V$ instead corresponds to a set of unit triangles that lie along the outer boundary of $H\setminus T$ then the above expression gives exactly Ciucu's generalisation of Kuo condensation~\cite{Ciucu15} for rhombus tilings of $H$. Further to this, $V$ could correspond to holes contained in the interior of $T$, in which case we can recover equivalent expressions to those found in articles by the author~\cite{Gilmore15,Gilmore16}, Ciucu and Fischer~\cite{CiucuFischer16}. 

If we select a set of admissibility inducing vertices $V$ in the correct way (so that certain regions of the interior of $H\setminus T$ are forced) then we also have an alternative method for deriving results from Ciucu and Krattenthaler~\cite{CiucuKratt13} and Eisenk\"olbl (together with others)~\cite{Eisen01}, in which the authors enumerate tilings of hexagons that are not semi-regular (also known as \emph{unbalanced}) and contain holes in their interior (see Figure~\ref{fig:Central}, left). This also answers the open problem posed in~\cite{CiucuFischer16}, since we may remove from $H\setminus T$ any set of even charge inducing holes, and this includes unit triangles that lie along its outer boundary (see Figure~\ref{fig:Central}, centre).

\begin{figure}
	\includegraphics[width=0.3 \textwidth]{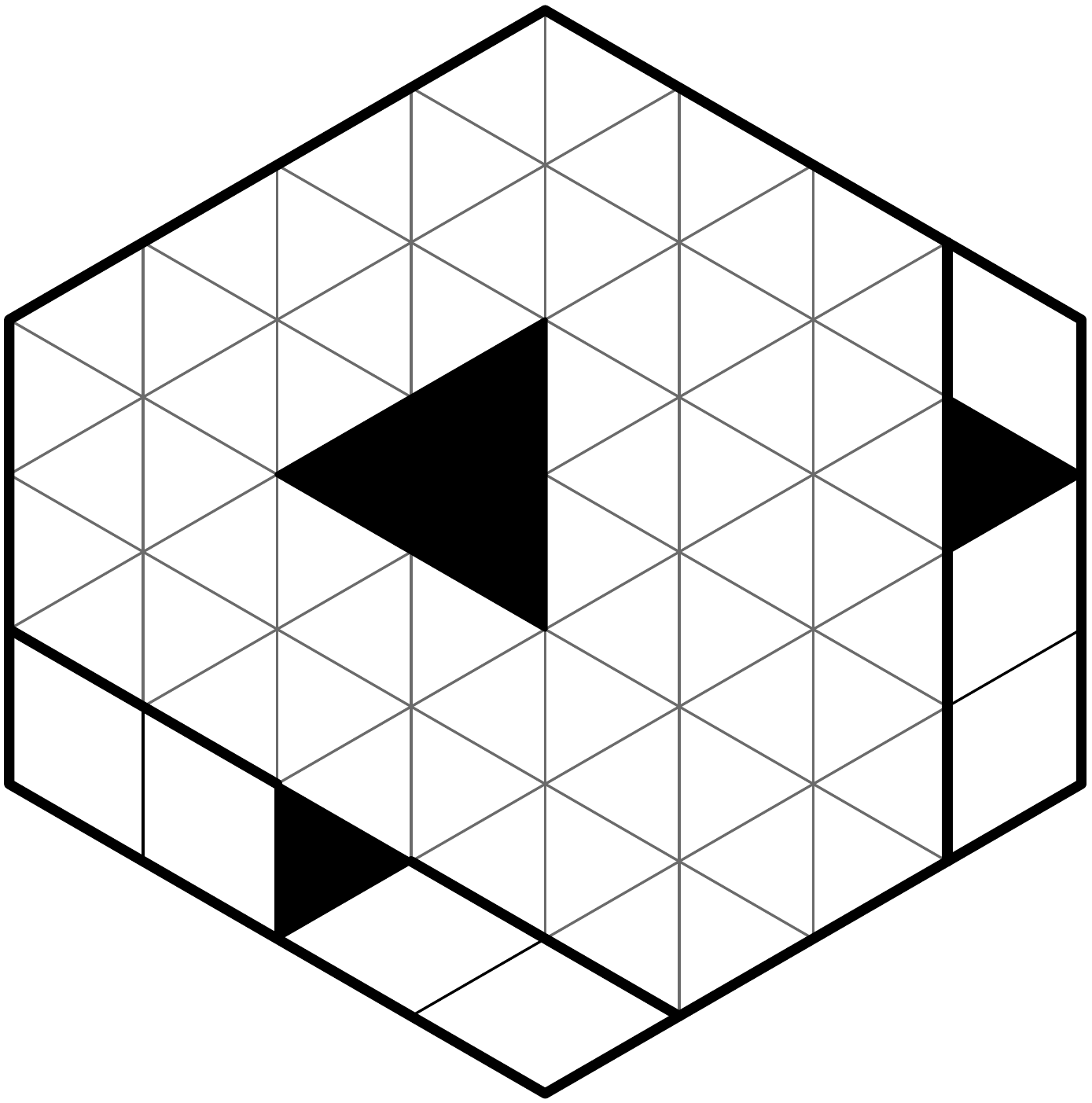}\,\,\,\,\,\,\,\,\,\,\includegraphics[width=0.3 \textwidth]{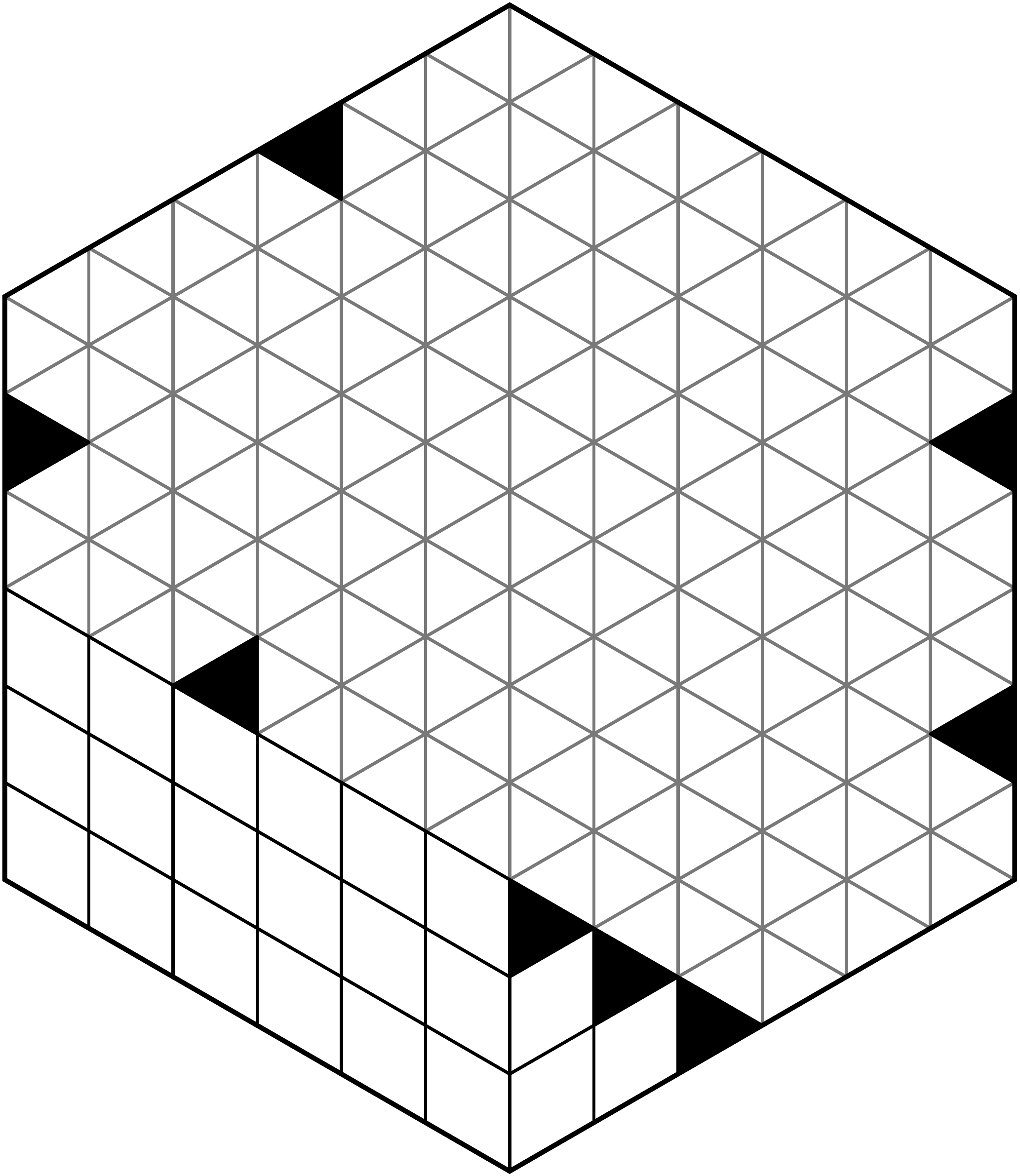}\,\,\,\,\,\,\,\,\,\,\includegraphics[width=0.3 \textwidth]{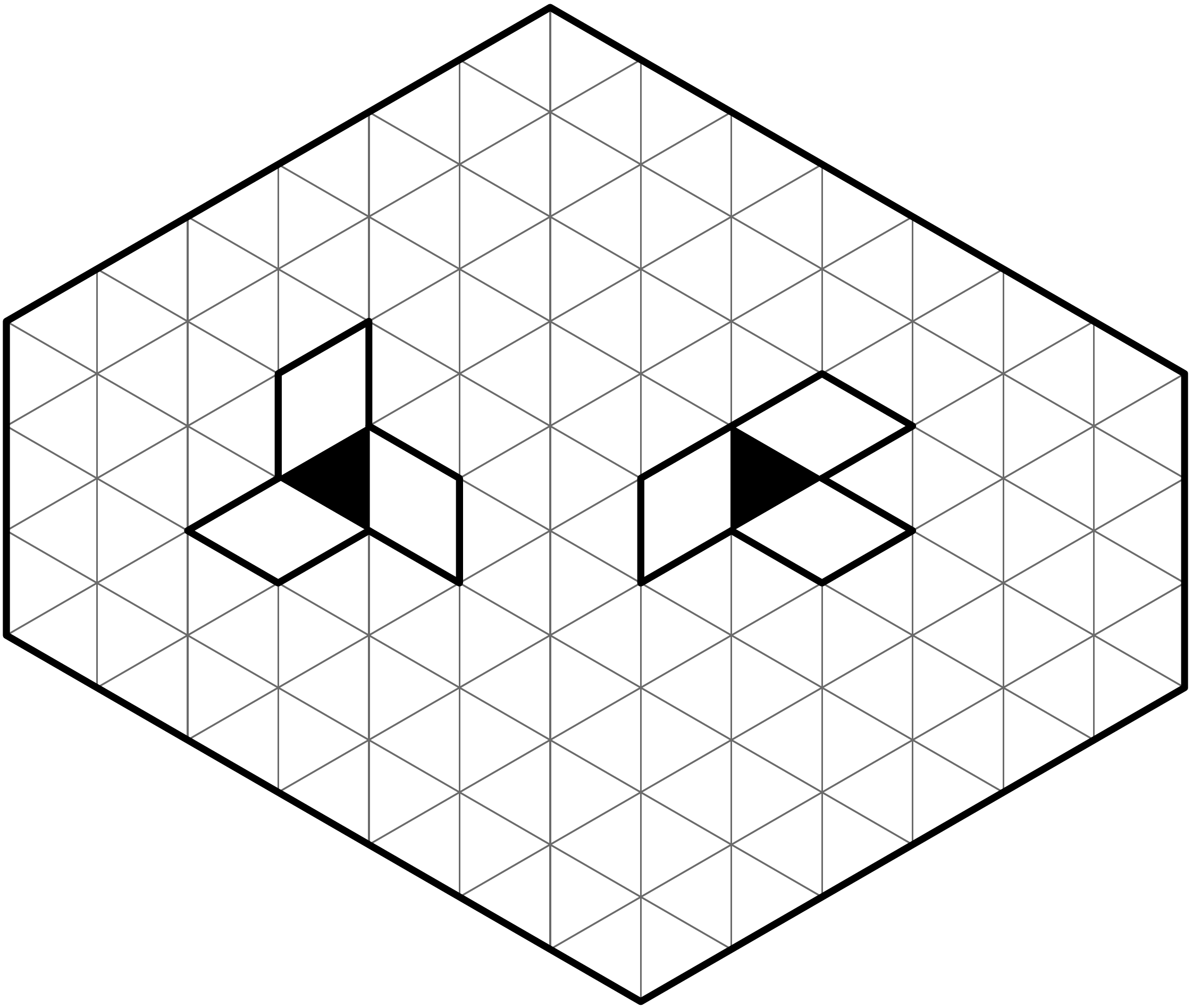}\caption{A hexagon of side lengths $5,2,4,3,4,2$ with a central hole obtained by removing triangles from the boundary that force tiles along the edges (left), an unbalanced hexagon with arbitrary dents (centre), and one arrangement of six rhombi that may be found in a rhombus tiling of $H_{7,3,6}\setminus T$ where $T$ is a pair of unit triangles (right).}\label{fig:Central}
\end{figure}

Theorem~\ref{thm:Main} therefore unites a large number of existing enumerative formulas for different classes of holes under one roof. Of course, unless we remove one pair of even charge inducing vertices then we must still compute a determinant, however the fact remains that the size of this determinant is constant for a fixed set of holes, irrespective of how much we vary the size of the region in which they are contained.

\subsection{Statistics of rhombi and correlations of holes}

Theorem~\ref{thm:Main} also has a number of potential applications that are of a statistical physics flavour, although this relies on obtaining the asymptotics of the entries of $K$ as the boundary of $H$ is sent to infinity. Successfully extracting these asymptotics would in the first instance give the probability of each unit rhombi occurring in a random tiling, thus yielding an analogous result to that of Kenyon~\cite{Kenyon97} for planar tilings (as opposed to those embedded on the torus). 

In turn this may lead to proofs of certain conjectures about the correlation of holes in a ``sea of unit rhombi". The \emph{correlation} of a set of holes $T$ is given by
$$\omega(T):=\lim_{n\to\infty}\frac{M(H\setminus T)}{M(H)}.$$
In 2008 Ciucu~\cite{Ciucu08} conjectured that if the distance between the holes is proportional to some real $\kappa$ then as $\kappa\to\infty$,
$$\omega(T)=\prod_{t\in T}C_t\prod_{1\le i<j\le |T|}\text{d}(t_i,t_j)^{\frac{1}{2}q(t_i)q(t_j)},$$
where $\text{d}(t_i,t_j)$ is the Euclidean distance between the holes $t_i,t_j$, $q(t_i)$ is the charge of the hole $t_i$ and $C_t$ is a constant dependent on each hole $t$.

Provided the entries of $K$ in the limit (and as the distance of the holes grows large) is such that the determinant of $K_V$ has a straightforward evaluation (as was the case in~\cite{Gilmore16}), Theorem~\ref{thm:Main} could well lead to a proof of Ciucu's conjecture for the most general class of holes to date.

One further application could be an alternative proof of the hexagonal lattice analogy of the conjecture of Fisher and Stephenson~\cite{FishSteph63}. This is a special case of Ciucu's conjecture where $T$ consists of a pair of unit triangles $\triangleleft,\triangleright$ and was recently proved by Dub\'{e}dat~\cite{Dub15}, however Theorem~\ref{thm:Main} may also offer a different approach to the same problem.

Although our approach does not immediately allow us to enumerate the number of tilings of a region containing a pair of unit triangular holes (unless they are even charge inducing), we may express the number of tilings of $H\setminus\{\triangleleft,\triangleright\}$ as a sum
$$\sum_{T\subset H}|\det(K_{V_T})|,$$
where the sum is taken over subsets $T$ that correspond to the different arrangements of unit rhombi whose edges coincide with the edges of $\triangleleft,\triangleright$ ($V_T$ is the corresponding sub-matrix of $K$ for each arrangement, see Figure~\ref{fig:Central}, right, for an example of one such arrangement). This would yield a sum consisting of $2^6$ terms,each involving a $6\times 6$ determinant evaluation, so supposing again that each determinant has a straightforward evaluation we would therefore obtain completely different proof to that found in~\cite{Dub15}.

\printbibliography

\end{document}